\newtheorem{theorem}{Theorem}
\newtheorem{prop}[theorem]{Proposition}
\newtheorem{cor}[theorem]{Corollary}
\newtheorem{example}{Example}
\numberwithin{equation}{section}
\def\Che{characteristic exponent\xspace}
\def\che{\Psi}                   % Characteristic exponent
\def\dgamma{\mathrm{Gamma}}
\def\dgeo{\mathrm{Geo}}
\def\dunif{\mathrm{Unif}}
\def\mean{\mathrm{E}}
\def\pr{\mathrm{P}}
\def\bm#1{B\sb{#1}}
\def\fht#1{\tau_{#1}}   % First hitting time at #1
\def\fet#1{\eta_{#1}}   % First exit time from #1
\def\jump#1{\Delta#1}
\def\dn{\phi}          % Density of normal 
\def\dfh#1{f_{#1}}     % Density of first hitting time at #1
\def\dfe#1{p_{#1}}     % Density of first exit time from #1
\def\dfex#1{q_{#1}}    % Density of first exit location from #1
\def\pdf{p.d.f\@ifnextchar.{}{.}}
\def\pmf{p.m.f\@ifnextchar.{}{.}}
\newlist{enum}{enumerate}{10}
\setlist[enum]{itemsep=0ex, leftmargin=1.5\parindent,
  topsep=.2\baselineskip, parsep=.2\baselineskip, label={\arabic*}.}
\def\ot{\leftarrow}
\begin{document}
\begin{center}
  \large
  \textbf{%\MakeUppercase{
    Exact sampling of first passage event of certain symmetric \levy
    processes with unbounded variation
    % }
  }
  \\[1.5ex]
  \normalsize
  Zhiyi Chi \\
  Department of Statistics\\
  University of Connecticut \\
  Storrs, CT 06269, USA, \\[.5ex]
  E-mail: zhiyi.chi@uconn.edu \\[1ex]
  \today
\end{center}

\begin{abstract}
  We show that exact sampling of the first passage event can be done
  for a \levy process with unbounded variation, if the process can be
  embedded in a subordinated standard Brownian motion.  By sampling a
  series of first exit events of the Brownian motion and first passage
  events of the subordinator, the first passage event of interest can
  be obtained.  The sampling of the first exit time and pre-exit
  location of the Brownian motion may be of independent interest.

  \medbreak\noindent
  \textit{Keywords and phrases.}  First exit; \levy process; unbounded
  variation; subordinator
  
  \medbreak\noindent
  2000 Mathematics Subject Classifications: Primary 60G51; Secondary
  60E07.
\end{abstract}

\section{Introduction} \label{s:intro}
The first exit event of a \levy process is an intensively studied
subject in probability \cite {bertoin:96:cup, bertoin:99:spl,
  sato:99:cup, doney:02:ap, doney:06:aap, huzak:04:jap,
  kluppelberg:04:aap, alili:05:aap}.  Despite numerous deep
theoretical results on the subject, exact sampling of the first
passage event remains challenging especially for processes with
infinite \levy measures.  Recently, in \cite{chi:16:spa,
  chi:12:advap}, it was shown that exact sampling can be done for a
wide range of \levy processes with bounded variation.  The methods of
\cite {chi:12:advap, chi:16:spa} rely on the decomposition of a
process as the  difference of two independent subordinators.  However,
no such decomposition exists for \levy processes with unbounded
variation.  In this paper, we show that exact sampling of the first
exit event can be achieved for several important classes of such
processes, a primary example being those that have a symmetric
truncated $\alpha$-stable \levy measure with $\alpha\in [1,2)$ and
with or without a Brownian component.  One of the main ingredients of
the method is the sampling of the time when a Brownian motion first
exits an interval and a pre-exit location of the Brownian motion,
which may be of interest in its own right.

The \levy processes covered by the paper are among those that can be
embedded into a subordinated Brownian motion (\cite {sato:99:cup},
Chapter 30).  Similar to \cite{chi:16:spa}, the general idea is to
sample certain first exit events of the subordinated Brownian motion
and extract from them the part that belong to the \levy process under
consideration.  However, care is required to deal with unbounded
variation.  Due to the structure of the subordinated Brownian motion,
the sampling of the first exit event can be addressed by attacking two
issues in tandem.  The first one is the sampling of the first passage
event of the subordinator underlying the subordinated Brownian
motion.  For this, the results in \cite{chi:16:spa} can be directly
used.  The second issue is the sampling of the first exit time of a
Brownian motion and its value at a pre-exit time point given the
time and location of the first exit.  For this, a huge number of known
results can be used (cf.\ \cite{borodin:02:bvb, morters:10:cup}).

Section \ref{s:main} presents the main sampling procedure.  First,
the scheme of the procedure is illustrated in Section \ref
{ss:description}.  Then, in Section \ref{ss:formal}, the procedure is
formalized as Algorithm \ref{a:fet}.  In Section \ref
{ss:example}, some examples are given.  Since the sampling of the
first passage time of the subordinator involved was addressed
previously \cite {chi:16:spa}, after Section \ref{s:main},
the discussion is mostly dedicated to the sampling for the Brownian
motion.  In Section \ref {s:bm-fet}, we consider the sampling of the
first exit time of the Brownian motion by exploiting its well known
distributional properties \cite {borodin:02:bvb}.  We then consider
the sampling of the value of the  Brownian motion at a time point
before its first exit.  In Section  \ref{s:pre-exit}, we obtain some
useful results on the distribution of the pre-exit value of the
Brownian motion, which we have not been able to find in the
literature.  In Section \ref {s:sampling-pre-exit}, the sampling of
the pre-exit value is considered.  The subtlety here is the handling
of the many negative terms in the series expansion of the
density function of the pre-exit value.  Finally, some comments are
made in section \ref{s:comments}.

\section{Preliminaries} \label{s:basics}
For $c\in (0,1)$, denote by $\dgeo(c)$ the geometric distribution on
$\{0,1,2,\ldots\}$ with probability mass function (\pmf) $(1-c) 
c^k$.  For $\theta>0$, denote by $\dgamma(\theta)$ the Gamma
distribution on $(0,\infty)$ with probability density function (\pdf)
$\cf{x>0} x^{\theta-1} e^{-x} / \Gamma(\theta)$.  Denote by $\dn_c(x)$
the \pdf\ of $N(0, c)$, the normal distribution with mean 0
and variance $c$.

\subsection{\levy processes}
Let $X = (X_t)_{t\ge 0}$ be a \levy process.  Then $\mean[e^{\iunit
  \lambda X_t}] = e^{-t\che_X(\lambda)}$, $t\ge 0$, $\lambda\in
\Reals$, where for some $c, \sigma\in\Reals$ and measure $\nu$ on
$\Reals\setminus \{0\}$ which satisfies $\int \min(x^2,1)\nu(\dd
x)<\infty$,
\begin{align*}
  \che_X(\lambda)
  =
  -\iunit c \lambda + \sigma^2\lambda^2/2 + 
  \int (1-e^{\iunit \lambda x} + \iunit \lambda x\cf{|x|<1})\,
  \nu(\dd x).
\end{align*}
$\che_X$, $c$, $\sigma$, and $\nu$ are called the \Che, linear 
coefficient, Brownian coefficient, and \levy measure of $X$,
respectively.  The Radon-Nikodym derivative $\nu(\dd x)/\dd x$,
provided it exists, is called the \levy density of $X$.  If $\sigma\ne
0$ or $\int \min(|x|,1)\nu(\dd x)=\infty$, then $X$ is said to have
infinite variation, otherwise it is said to have finite variation.
In the latter case $\che_X$ can be written as
\begin{align*}
  \che_X(\lambda)
  = 
  -\iunit \delta\lambda + \int (1-e^{\iunit \lambda x})\, \nu(\dd x),
\end{align*}
with $\delta$ called the drift coefficient of $X$.  A necessary and
sufficient condition for $X$ to be nondecreasing is that it has
finite variation with $\delta\ge 0$ and $\nu((-\infty, 0))=0$.  In
this case, $X$ is called a subordinator.  Furthermore, if $Y =
(Y_t)_{t\ge 0}$ is a process independent of $X$, then $Y_X =
(Y_{X_t})_{t\ge 0}$ is called a subordinated process.  In particular,
if $Y$ is a standard Brownian motion, i.e., a \levy process with
$\che_Y(\lambda) = \lambda^2/2$, then $Y_X$ is a \levy process with
linear coefficient 0, Brownian coefficient $\sqrt\delta$, and \levy
density $\intzi \dn_s(\cdot) \nu(\dd s)$ (\cite {sato:99:cup}, Theorem
30.1).

For any open or closed set $A\subset \Reals$, according to Blumenthal
0-1 law, either 0 is regular for $A$ (with respect to $X$), i.e.,
$\varsigma := \inf\{t>0: X_t\in A\}=0$ \as, or 0 is irregular for $A$,
i.e., $\varsigma > 0$ \as\ (\cite {sato:99:cup}, p.~313).  It is easy
to see that if $X$ is not a compound Poisson process and is symmetric,
i.e., $X\sim -X$, then 0 is regular for both half-lines $(-\infty, 0)$
and $(0, \infty)$.  As a passing remark, if $X$ has infinite
variation, then 0 is regular for both half-lines (\cite
{bertoin:96:cup}, p.~167), while if $X$ has finite variation,
Bertoin's test provides a necessary and sufficient condition for 0 to
be regular for a half-line (\cite {bertoin:97:bsm}; \cite
{doney:07:sg-b}, Theorem 6.22).

\subsection{First hitting times of a Brownian motion}
Let $\bm{}=(\bm t)_{t\ge 0}$ such that $\bm t - \bm 0$ is a standard
Brownian motion.  For $a\in \Reals$, let
\begin{align*}
  \fht a = \inf\{t: \bm t = a\},
\end{align*}
while for $a>0$, let
\begin{align*}
  \fet a = \inf\{t: \bm t\in\{-a, a\} \} = \min(\fht {-a}, \fht a).
\end{align*}
Given $x\in \Reals$, denote by $\pr^x$ the probability measure under
which $\bm0\equiv x$ and by $\mean^x$ the associated expectation.  It
is well known that under $\pr^x$ the \pdf~of $\fht a$ is $\dfh{a-x}$,
where
\begin{align} \label{e:b-fht}
  \dfh a(t)
  = |a| t^{-1} \dn_t(a) = -\dn'_t(|a|) =
  \frac{|a| e^{-a^2/(2t)}}{\sqrt{2\pi} t^{3/2}}, 
\end{align}
while for $a>0$ and $x\in (-a,a)$, the \pdf~of $\fet a$, denoted by
$\dfe a(t,x)$, has series expressions
\begin{align} \label{e:b-fet1}
  \dfe a(t,x)
  &=
  \sumzi k (-1)^k [\dfh{2ka + a- x}(t) + \dfh{2ka + a + x}(t)].
  \\\label{e:b-fet2}
  &
  =
  \frac \pi{2 a^2}
  \sumzi k (-1)^k (2k+1)\exp\Cbr{
    -\frac{(2k+1)^2 \pi^2 t}{8 a^2}}\cos\frac{(2k+1)\pi x}{2a}.
\end{align}
The series in \eqref{e:b-fet1} converges rapidly for small $t>0$ but
slowly for large $t>0$, while the one in \eqref {e:b-fet2} has the
opposite property.  Eq.~\eqref {e:b-fet2} is due to the following fact
(\cite{morters:10:cup}, \S7.4).  Let $U$ be a bounded and connected
open set and $g\in C(U)$.  If $u(t,x)\in C^2((0,\infty)\times U)$ is a
bounded solution to the heat equation $\partial_t u =
(1/2) \partial_{xx} u$ with initial condition $\lim_{(t, x)\to (0,
  x_0)}  u(t,x)=g(x_0)$ for all $x_0\in U$ and Dirichlet boundary
condition $\lim_{(t,x)\to (t_0, x_0)} u(t,x)=0$ for all $t_0>0$ and
$x_0\in \partial U$, then
\begin{align*}
  u(t,x) = \mean^x [g(\bm t) \cf{t<\fet U}], \quad
  \text{where}\
  \fet U = \inf\{t: \bm t\not\in U\}.
\end{align*}
Calculating $\mean^x[g(\bm t)\cf{t < \fet U}]$ then boils down to
solving the heat equation with the specified initial and boundary
conditions.  In particular,  if $U = (-a,a)$, by separating the
variables $t$ and $x$ and considering the eigenfunctions of
$(1/2) \partial_{xx}$ and those of $\partial_t$ with the same
eigenvalues,
\begin{align} \label{e:heat-eigen} 
  u(t,x)
  =
  \sumzi k \alpha_k
  \exp\Cbr{-\frac{k^2\pi^2 t}{8 a^2}} \varphi_k\Grp{\frac x{2a}},
\end{align}
where $\varphi_k(x)$ is $\cos (k\pi x)$ for odd $k$ and $\sin (k \pi
x)$ for even $k$, and $\alpha_k = a^{-1}\int^a_{-a} g(x)\varphi_k(x)
\, \dd x$.  To get Eq.~\eqref {e:b-fet2}, apply \eqref {e:heat-eigen}
to $g(x)\equiv 1$ to yield $\pr^x\{\fet a > t\}$, and then
differentiate the result in $t$.

\section{Main results} \label{s:main}
Let $X$ be a symmetric \levy process.  Suppose the Brownian
coefficient of $X$ is $\delta\ge 0$ and its \levy density is
\begin{align*}
  \lambda(x) = \lambda_0(x)\cf{|x|<r}, \quad 0<r\le\infty,
\end{align*}
such that $\lambda_0(x)$ is the \levy density of a subordinated
process $Z = \bm S$, where $\bm{}$ is a standard Brownian motion and
$S$ a subordinator with drift coefficient $\delta^2$ independent of
$\bm{}$.  Then $Z$ is symmetric and its Brownian coefficient is
$\delta$ as well.

Given interval $I = (b,c)$ with $-\infty\le b<0<c\le \infty$, the
first exit time of $X$ out of $I$ is 
\begin{align*}
  T_I = \inf\{t>0: X_t \not\in I\}.
\end{align*}
The value and jump of $X$ at the time of exit are also important
information.  We shall consider the sampling of the triplet  $(T_I,
X_{T_I-}, X_{T_I})$, where for $t>0$, $X_{t-}$ is the left limit of
$X$ at $t$.

By right-continuity of $X_t$, $T_I>0$.  As long as $X_t\not\equiv
0$ and $\min(|b|, c)<\infty$, $T_I$ is finite.  This is because
either i) $\lim X_t = \infty$ \as, or ii) $\lim X_t = -\infty$ \as, or
iii) $\limsup X_t = - \liminf X_t = \infty$ \as\ (\cite
{bertoin:96:cup}, Theorem VI.16). Since $X$ is symmetric, iii)
must hold, so $T_I <\infty$.  As a passing remark, for any
\levy process, Erickson's test provides a necessary and sufficient
condition on which of the three cases holds (\cite
{doney:07:sg-b}, Theorem 4.15 and p.~64).

In this section, it is always assumed that
\begin{align} \label{e:X-non-CP}
  \text{$X$ is not a compound Poisson process},
\end{align}
which is equivalent to $S$ not being a compound Poisson subordinator.
Since $X$ is symmetric, then with respect to $X$, $c$ is regular for
$(c, \infty)$ and $b$ is regular for $(-\infty, b)$, so
\begin{align} \label{e:fet-X}
  T_I = \inf\{t>0: X_t\not\in [b,c]\}.
\end{align}
In particular, if $0<c<\infty$, $T_{(-\infty,c)}$ is the first passage
time of $X$ across $c$, i.e., 
\begin{align*}
  T_{(-\infty, c)} = \inf\{t>0: X_t > c\}.
\end{align*}

By definition, $Z_{t-} = \lim_{u\to t-} \bm{S_u}$.  Under assumption
\eqref{e:X-non-CP},
\begin{align} \label{e:LC}
  Z_{t-} = \bm{S_{t-}}.
\end{align}
Indeed, as $u\to t-$, since $S$ is not a compound Poisson process and
hence strictly increasing, $S_u\to s := S_{t-}$, given $\bm{S_u}\to
\bm{s-} = \bm s$ by continuity of $\bm{}$.  Then \eqref{e:LC}
follows.

\subsection{Description} \label{ss:description}
\begin{figure}[t]
  \caption{$Z = \bm S$.  (a) $r/2\le Z_{\fht{}} \le Z_{\fht{}-}+r$, so
    $X_{\fht{}} = Z_{\fht{}}$ and $\fht{}$ is the first exit time of
    $X$ and $Z$ out of $(-r/2, r/2)$; (b) $-r/2\le Z_{\fht{}}\le r/2$,
    so $X_{\fht{}} = Z_{\fht{}}$, but $\fht{}$ is not an exit time of
    $X$ out of $(-r/2, r/2)$; (c) $Z_{\fht{}} > Z_{\fht{}-}+r$, so
    $X_{\fht{}} = X_{\fht{}-}$, $\fht{}$ is not the first exit time of
    $X$ out of $(-r/2, r/2)$ but is that of $Z$ (from the top); (d)
    $Z_{\fht{}}<-r/2$}
  \label{f:fet}
  \begin{center}
    \setlength{\unitlength}{1.2mm}
    \begin{picture}(105,58)(-1,10)
      \put(0,5){\scalebox{.84}[.72]{\includegraphics{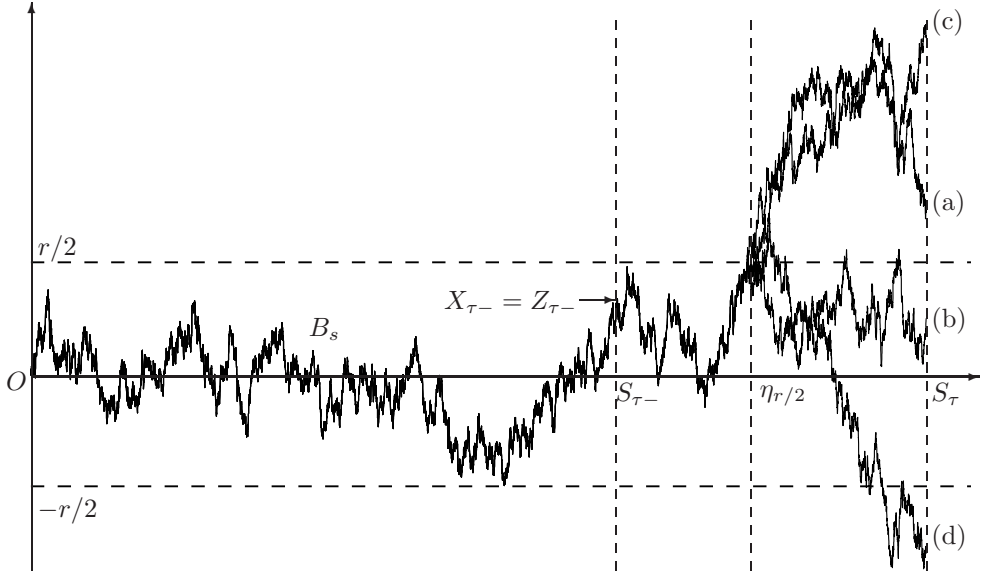}}}
      \put(0.3,5){\vector(0,1){63}}
      \put(0.3,26.5){\vector(1,0){105}}

      \put(1,11){\small $-r/2$}
      \put(1,40){\small $r/2$}
      \put(-2.5,25){\small $O$}

      \put(31, 31){\small $\bm s$} 
      \put(46, 34){\small $X_{\fht{}-} = Z_{\fht{}-}$}
      \put(61, 35){\vector(1,0){4}}

      \multiput(65, 5)(0,2){31}{\line(0,1){1}}
      \multiput(80, 5)(0,2){31}{\line(0,1){1}}
      \multiput(99.5, 5)(0,2){31}{\line(0,1){1}}

      \put(65,23.8){\small $S_{\fht{}-}$}
      \put(81,24.3){\small $\fet {r/2}$}
      \put(100,23.8){\small $S_{\fht{}}$}

      \put(100, 45){\small (a)}
      \put(100, 32){\small (b)}
      \put(100, 65){\small (c) }
      \put(100, 8){\small (d)}
    \end{picture}
  \end{center}
\end{figure}

To sample $(T_I, X_{T_I-}, X_{T_I})$, the approach is to embed
$X$ into $Z = \bm S$ and exploit a sequence of hitting or passage
events of $\bm{}$ and $S$.  By embedding it means that, by identifying
$Z$ with $X + V$, $X$ is a part of $Z$, where $V$ is a
compound Poisson process with \levy density $\lambda_0(x)\cf{|x|\ge
  r}$ independent of $X$.  Equivalently,
\begin{align} \label{e:X-Z}
  X_t = Z_t - \sum_{s\le t} \jump Z_s \cf{|\jump Z_s|\ge r},
\end{align}
where $\jump Z_s = Z_s - Z_{s-}$ is the jump of $Z$ at $s$.  The
issue is how to identify the jumps of $V$ so that random
variables purely due to $X$ can be extracted from those of $Z$.

Figure \ref{f:fet} illustrates the idea.  Suppose $(-r/2, r/2) \subset
I$.  At time $\fet{r/2}$, $\bm{}$ hits the boundary of $(-r/2,r/2)$,
say at $r/2$.  Then
\begin{align} \label{e:pre-exit}
  |\bm{s_2} - \bm{s_1}| < r \quad\text{for all}\ 0< s_1 < s_2
  \le\fet{r/2}.
\end{align}
Let
\begin{align*}
  \fht{} = \inf\{t>0: S_t>\fet{r/2}\}.
\end{align*}
From \eqref{e:LC} and \eqref{e:pre-exit}, it follows that for $0 < t <
\fht{}$, $|\jump Z_t| <r$.  Thus, in $[0,\fht{})$, $Z$ only has jumps
of size strictly less than $r$.  Then $X_t = Z_t$ on $[0, \fht{})$ and
the first jump in $V$ can only appear at $\fht{}$.  Figure \ref
{f:fet} shows a scenario where $S$ has a jump at $\fet{r/2}$.  Because
the potential measure of $S$ is diffuse, $S_{\fht{}-} < \fet{r/2} <
S_{\fht{}}$ (\cite {bertoin:96:cup}, Propositions I.15 and III.2).
Then $X_{\fht{}-} = Z_{\fht{}-} = \bm{S_{\fht{}-}}$ and by \eqref
{e:LC},  $\jump Z_{\fht{}} = \bm{S_{\fht{}}} - \bm{S_{\fht{}-}}$.  If
$|\jump Z_{\fht{}}|<r$, then the jump belongs to $X$, giving
$X_{\fht{}} = Z_{\fht{}}$.  This is the case in (a) and (b) in Figure
\ref{f:fet}.  In (a), since $X_{\fht{}} \ge r/2$, $\fht{}$ is the
first time of $X$ out of $(-r/2,r/2)$, whereas in (b), it is not.  If
$|\jump Z_{\fht{}}|\ge r$, then the jump belongs to $V$, giving
$X_{\fht{}} = X_{\fht{}-}$.  This is the case in (c) in Figure \ref
{f:fet}, where $\fht{}$ is the first exit time of $Z$ out of $(-r/2,
r/2)$ but not that of $X$.  Also, as (d) shows, even if $\bm{}$ first
exits $(-r/2, r/2)$ at the top boundary point, $Z$ and $X$ may first
exit at the lower boundary. 

From the above description, it is seen that the following quantities
have to be sampled in sequel,
\begin{enumerate}
\item[a)] $\fet{r/2}$, the first hitting time of $\bm{}$ at $\pm r/2$;

\item[b)] the triplet $(\fht{}, S_{\fht{}-}, S_{\fht{}})$, where
  $\fht{}$ the first passage time of $S$ across $\fet{r/2}$; and

\item[c)] $\bm{S_{\fht{}-}}$ and $\bm{S_{\fht{}}}$.
\end{enumerate}
Because $S$ and $\bm{}$ are independent, b) boils down to the sampling
of the first passage of $S$ across any fixed level.  This is addressed
in \cite{chi:16:spa} for several important classes of subordinators.
Items a) and c) will be considered in following sections.  The
sampling of $\bm{S_{\fht{}-}}$ boils down to that of $\bm T$
conditional on $(\fet{r/2}, \bm{\fet{r/2}}) = (T+t, \pm r/2)$ for
fixed $T,t>0$.  On the other hand, by the strong Markov property of
Brownian motion, $\bm{S_{\fht{}}}$ can be simply sampled from a normal
distribution.

Figure \ref{f:fet} just illustrates a single iteration of the sampling
procedure.  If at the end of the iteration, $X$ has yet exited $I$,
then the procedure is renewed at time $\fht{}$.  The iteration
continues until an exit occurs.  Note that $\fht{}$ is not a stopping
time of $X$ or $Z$ as it depends on $\fet{r/2}$, information not
available via $X$ or $Z$.  However, conditional on $\bm{}$, $\fht{}$
is a stopping time of $S$, justifying iterating by renewal.  Indeed,
the procedure can be thought of as one with the entire path of $\bm{}$
being sampled in advance and $S$ being the only random process during
the run.  In this setting, the sampling of $\fet{r/2}$, $Z_{\fht{}-}$
and $Z_{\fht{}}$ can be regarded as a subroutine to retrieve data from
the path of $\bm{}$. 

There are two simpler cases not covered so far.  First, if $S$ has a
positive drift, then it may creep across $\fet{r/2}$, i.e.,
$S_{\fht{}-} = S_{\fht{}} = \fet{r/2}$ (\cite{bertoin:96:cup}, Theorem 
III.5).  Clearly in this case $X_{\fht{}-} = X_{\fht{}}$ is equal to the
point where $\bm{}$ exits $(-r/2,r/2)$.  Second, suppose we wish to
sample $(\varsigma, X_{\varsigma-}, X_\varsigma)$ instead, where
$\varsigma = \min(T_I, T_0)$ with $T_0<\infty$ a fixed terminal point.
This allows, for example, the sampling of $X_{T_0}$ when $I = \Reals$.
If $T_0<\fht{}$, then $S_{T_0-} = S_{T_0}$ instead of $S_{\fht{}-}$
and $S_{\fht{}}$ should be sampled conditional on $S_{T_0} <
\fet{r/2}$ (cf.\ \cite {chi:16:spa}) and then $X_{T_0} = Z_{T_0} =
\bm{S_{T_0}}$ is sampled conditional on $(\fet{r/2},
\bm{\fet{r/2}})$.

\subsection{Formal procedure} \label{ss:formal}
The description in Section \ref{ss:description} is formalized as
Algorithm \ref {a:fet} with additional detail taken into account.  In
the procedure, $I = (b,c)$ and $\varsigma$ is defined at the end of
Section \ref{ss:description}.  It is quite routine to extend the
procedure to sample the first exit event of $X+Y$, where $Y$ is a
compound Poisson process independent of $X$ (cf.\ \cite{chi:16:spa}).
For brevity, the detail of the extension is omitted.

\begin{algorithm}[h]
  \caption{Sampling $(\varsigma, X_{\varsigma-}, X_\varsigma)$}
  \label{a:fet}
  \begin{algorithmic}[1]
    \REQUIRE $-b, c, r, T_0$ all in $(0, \infty]$ such that if
    $|b|=c=\infty$ then $r<\infty$ and $T_0<\infty$

    \STATE $T\ot 0$, $W\ot 0$

    \REPEAT

    \STATE $a\ot \min(r/2, W-b, c-W)$  \label{a:fet-a}

    \STATE Sample $h$ from the distribution of $\fet a$  and sample
    $y$ from $\{-a,a\}$ uniformly \label{a:fet-y}

    \STATE Sample $(t, s_-, s_+)$ from the distribution of $(\theta,
    S_{\theta-}, S_\theta)$, where $\theta = \min(\fht{}, T_0 - T)$
    with $\fht{}=\inf\{t>0:  S_t> h\}$ \label{a:fet-fpt}

    \STATE Sample $x$ from the conditional distribution of $\bm{s_-}$
    given $(\fet a, \bm{\fet a}) = (h,y)$ \label{a:fet-x}

    \IF {$s_-<s_+$}

    \STATE Sample $u$ from $N(0, s_+- s)$  \label{a:fet-zeta}

    \STATE $d\ot y- x + u$, $D \ot d\cf{|d|\le r}$ \label{a:fet-D}

    \ELSE

    \STATE $D\ot 0$ \label{a:fet-D0}

    \ENDIF 

    \STATE $T\ot T + t$, $W\ot W + x + D$  \label{a:fet-update}

    \UNTIL $T=T_0$ or $W\not\in (b,c)$

    \RETURN $(T, W-D, W)$
  \end{algorithmic}
\end{algorithm}

In Algorithm \ref{a:fet}, $T$ is a value such that $X_t\in (b,c)$ for
all $t\in (0,T)$ and $W= X_T$.  In each iteration $(T, W)$ is updated
following the description in Section \ref{ss:description}.  Some
explanations are in order.  The value of $a$ on line \ref {a:fet-a}
makes sure the interval $(W-a, W+a)$ is in $(b,c)$.  Since $\bm{}$ is
a standard Brownian motion, $y$ on line \ref {a:fet-y} follows the
distribution of $\bm{\fet a}$ conditional on $\fet a$.  If $s_-<s_+$,
then as explained in Section \ref {ss:description}, $t < T_0 - T$ and
$s_-<h<s_+$.  By the strong Markov property of $\bm{}$ and the
independence between $\bm{}$ and $S$, conditional on $S_\theta$,
$\bm{S_\theta} - \bm{\fet a}$ is independent of $(\bm t)_{t\le\fet a}$
and follows $N(0, \bm{S_\theta} - \bm{\fet a})$.  Thus $u$ on line
\ref {a:fet-zeta} follows the distribution of $\bm{S_\theta} -
\bm{\fet a}$, and $d$ and $D$ on line \ref{a:fet-D} follow the joint
distribution of $\jump Z_{\fht{}}$ and $\jump X_{\fht{}}$.  On the
other hand, if $s_-=s_+$, then either $s_-=h$ or $T_0 - T < t$,
resulting in no jump and line \ref{a:fet-D0}.  Line \ref
{a:fet-update} updates the values of $T$ and $W$.  It is clear that
the iteration stops only when $T=T_0$ or $X_T\in (b,c)$, i.e.\
$T=\varsigma$.  When the iteration stops, since $D = \jump X_T$,
$(T,W-D, W)$ follows the distribution of $(\varsigma, X_{\varsigma-},
X_\varsigma)$.

\begin{theorem} \label{t:fet}
  The iteration in Algorithm \ref{a:fet} eventually stops \as\ and its
  output follows the distribution of $(\varsigma, X_{\varsigma-},
  X_\varsigma)$.
\end{theorem}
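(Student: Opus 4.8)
The plan is to establish the two assertions of Theorem~\ref{t:fet} separately: first that the iteration terminates almost surely, and then that the returned triplet has the correct law. For termination, I would show that each iteration consumes a nonnegligible amount of time with probability bounded below, so that the cumulative time $T$ cannot stay below $\min(T_0, T_I)$ indefinitely. Concretely, the stopping condition is $T = T_0$ or $W \notin (b,c)$; since $T_I < \infty$ almost surely (as argued in the excerpt via symmetry and the trichotomy of \cite{bertoin:96:cup}, Theorem~VI.16) and $T_0$ is a fixed finite bound when $|b| = c = \infty$, it suffices to rule out an infinite sequence of iterations accumulating to a finite time. Because $a$ on line~\ref{a:fet-a} is bounded below by a positive quantity whenever $W$ is bounded away from $b$ and $c$, and $\fet{a}$ has a density that is not degenerate at $0$, each increment $t$ of $T$ is stochastically bounded below, precluding a Zeno-type accumulation. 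The delicate point is that $a$ itself shrinks as $W$ approaches the boundary, so I would argue that approaching the boundary is precisely the event that triggers exit, closing the loop.

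\emph{The core of the proof} is the distributional claim, which I would prove by induction on the iterations, showing that the algorithm faithfully realizes the embedding~\eqref{e:X-Z} of $X$ into $Z = \bm{S}$. The loop invariant is that at the start of each iteration, $(T, W)$ equals $(T_k, X_{T_k})$ for the $k$-th renewal time $T_k$, with $X_t \in (b,c)$ for all $t \in (0, T_k)$, and that the future of $X$ after $T_k$ is an independent copy of $X$ started afresh. This invariant rests on the renewal justification already given in Section~\ref{ss:description}: although $\fht{}$ is not a stopping time of $X$ or $Z$, it is a stopping time of $S$ conditional on the fixed Brownian path, so the strong Markov property of $S$ (applied pathwise in $\bm{}$) legitimizes restarting. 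I would verify that lines~\ref{a:fet-a}--\ref{a:fet-update} correctly sample, in turn, the first exit time $\fet{a}$ and exit location $y$ of $\bm{}$ (line~\ref{a:fet-y}), the first passage triplet $(\theta, S_{\theta-}, S_\theta)$ of the subordinator (line~\ref{a:fet-fpt}), the pre-exit Brownian value $\bm{S_{\theta-}}$ via the conditional law of $\bm{s_-}$ given $(\fet{a}, \bm{\fet{a}}) = (h,y)$ (line~\ref{a:fet-x}), and finally the jump $\jump Z_{\fht{}}$ through $u \sim N(0, s_+ - s)$ together with the truncation $D = d\cf{|d| \le r}$ that separates the $X$-jump from the $V$-jump (lines~\ref{a:fet-zeta}--\ref{a:fet-D}).

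\emph{The key identity} to check at line~\ref{a:fet-D} is that $d = y - x + u$ reproduces $\jump Z_{\fht{}} = \bm{S_{\fht{}}} - \bm{S_{\fht{}-}}$, since $x = \bm{s_-}$, $y = \bm{\fet{a}}$, and by the strong Markov property of $\bm{}$ the increment $\bm{S_\theta} - \bm{\fet{a}}$ is $N(0, \bm{S_\theta} - \bm{\fet{a}})$-distributed and independent of the pre-exit trajectory, so that $u$ plays the role of this increment with variance $s_+ - s$. The indicator $\cf{|d| \le r}$ then implements~\eqref{e:X-Z} exactly: a jump of magnitude at least $r$ belongs to $V$ and is discarded ($D = 0$), while a smaller jump belongs to $X$ and is retained. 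I would also treat the two boundary scenarios flagged at the end of Section~\ref{ss:description}, namely creeping across $\fet{a}$ (where $s_- = s_+$ forces line~\ref{a:fet-D0}) and the terminal truncation at $T_0$ (where $\theta = T_0 - T$ caps the passage time); in both cases no $X$-jump is produced, consistent with~\eqref{e:LC}.

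\emph{The main obstacle} I anticipate is the careful justification of the renewal argument across iterations, specifically the claim that conditioning on the full Brownian path turns $\fht{}$ into a genuine $S$-stopping time while preserving the correct unconditional law of the extracted $X$-quantities. The subtlety is that the sampled Brownian quantities $\fet{a}$, $\bm{S_{\theta-}}$, and $\bm{S_{\theta}}$ are correlated through the shared path $\bm{}$, yet the algorithm samples them using only the conditional law of the pre-exit value given the exit data; I would need to confirm that this conditional sampling, combined with the strong Markov property at $\fet{a}$ and the independence of $\bm{}$ and $S$, yields exactly the joint law of $(\bm{S_{\fht{}-}}, \bm{S_{\fht{}}})$, and that the fresh start after each iteration decouples correctly so that the product structure of the successive increments matches that of $X$ itself.
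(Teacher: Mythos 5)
Your second, third, and fourth paragraphs (the loop invariant, the identity $d = y - x + u$ reproducing $\jump Z_{\fht{}}$, the role of the truncation $\cf{|d|\le r}$, and the renewal justification conditional on the Brownian path) essentially reproduce the discussion the paper gives in Section~\ref{ss:formal} just before the theorem; the paper's own proof opens with ``it suffices to show that the iteration eventually stops \as'' and is devoted entirely to termination. That is exactly where your proposal has a genuine gap. Your termination argument splits into two cases: if $W$ stays bounded away from $b$ and $c$, the time increments are stochastically bounded below (fine); if $W$ approaches the boundary, you assert that ``approaching the boundary is precisely the event that triggers exit.'' That assertion is the whole difficulty, not its resolution. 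In the dangerous scenario the radii $a_n$ and exit times $h_n$ shrink to $0$, and the sampled positions $w_n$ could in principle creep up to $c$ from inside forever: in each iteration the Brownian motion may exit the small interval at its \emph{inner} endpoint, or exit at $c$ but have the subsequent jump pull the value of $X$ back below $c$. Nothing in your proposal excludes infinitely many such iterations accumulating at a finite time $t_*\le\varsigma$, which is precisely the non-termination event.

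The paper closes this gap in two steps that you would need to supply. First, on the non-termination event $E$, with $t_n, w_n$ the values of $T,W$ after iteration $n$, $s_n = S_{t_n}$, $z_n = Z_{t_n}$, one shows $s_n \to s_* = S_{t_*}$ (conditional on $\bm{}$, the $t_n$ are stopping times of $S$), hence by continuity of $\bm{}$ the oscillation of $Z$ on $[t_n, t_*]$ tends to $0$; consequently all jumps are eventually of size $<r$, so $w_n = z_n \to \bm{s_*}$, and since on $E$ one also has $h_n\to 0$, hence $a_n \to 0$, hence $\min(w_n - b,\, c - w_n)\to 0$, the sequence $w_n$ converges to a single boundary point, say $c$. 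Second --- the key quantitative step --- for large $n$ the sampling interval in iteration $n$ has $c$ itself as its upper endpoint, so $\bm{}$ exits there with probability $1/2$; on that event the remaining increment $\bm{s_n} - \bm{s_{n-1}+h_n}$ is a centered (possibly degenerate) normal, hence nonnegative with probability at least $1/2$, which forces $w_n \ge c$ and stops the algorithm. So, conditionally on the past, each iteration fails to stop with probability at most $3/4$, and $\pr(E) \le \lim_n (3/4)^n = 0$. Without this coin-flip argument (or some substitute ruling out the boundary-creeping scenario), the first assertion of Theorem~\ref{t:fet} remains unproved.
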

\begin{proof}
  It suffices to show that the iteration eventually stops \as.  Let
  $t_0=0$, $w_0=0$, and for $n\ge 1$, $t_n$ and $w_n$ the values of
  $T$ and $W$ at the end of the $n\th$ iteration, respectively.   Let
  $s_n = S_{t_n}$ and $z_n = Z_{t_n}$.  It is easy to see that both
  $t_n$ and $s_n$ are strictly increasing and $t_n \le \varsigma <
  \infty$.  Define event
  \begin{align*}
    E = \{\text{the iteration in Algorithm \ref{a:fet} does not
      stop}\}.
  \end{align*}
  In $E$, there are infinitely many $t_n$.   Let $t_* = \lim t_n$
  and $s_* = \lim s_n$.  Since conditional on $\bm{}$, $t_n$ are
  stopping times of $S$, $s_* = S_{t_*}$ \as\ (\cite {bertoin:96:cup},
  Proposition I.7).  Then by continuity of $\bm{}$, 
  \begin{align*}
    \sup_{t_n \le t<t'\le t_*} |Z_{t'} - Z_t|
    \le
    2\sup_{t_n \le t \le t_*} |Z_t - \bm {s_*}|
    \le
    2\sup_{s_n \le s\le s_*} |\bm s - \bm {s_*}| \to 0, \quad n\toi.
  \end{align*}
  As a result, $z_n\to \bm{s_*}$ and there is $N\ge 0$ such that
  $|\jump Z_t|<r$ on $[t_N, t_*]$.  By renewal argument, we can assume
  $N=0$ without loss of generality.  Then $X_t = Z_t$ on $[0, t_*]$,
  in particular,
  \begin{align} \label{e:wzb}
    w_n = z_n \to \bm{s_*}.
  \end{align}
  On the other hand, let $a_n$, $h_n$, and $\fht n$ be the values of
  $a$, $h$, and $\fht{}$ in the $n\th$ iteration, respectively.  Then
  $t_n = t_{n-1} + \min(\fht n, T_0 - t_{n-1})$.  In $E$, $t_n<T_0$.
  Then $\fht n = t_n - t_{n-1}\to 0$.  Since $\fht n \sim \inf\{t>0:
  S_t>h_n\}$, then $h_n\to 0$.  Since $h_n\sim \inf\{t>0: \bm t>
  a_n\}$, then $a_n\to 0$.  Consequently, $\min(w_n - b,   c-w_n)\to
  0$.  Combined with \eqref{e:wzb}, this yields $w_n = z_n$ either
  converges to $b$ or to $c$.  Without loss of generality, suppose the
  limit is $c$.  By using renewal argument again, we can assume that
  $c-w_n < w_n - b$ for all $n\ge 0$.  Note that by renewing
  $(X_t,Z_t)$ at $t=t_{n-1}$, $\bm s$ is renewed at $s=s_{n-1}$.  It
  follows that $s_{n-1} + h_n$ is the first $s>s_{n-1}$ such that $\bm
  s$ hits the boundary of $(w_n - a_n, c)$ and, with $S$ being
  non-compound Poisson, $t_n$ is the first $t\ge t_{n-1}$ such that
  $S_t\ge s_{n-1} + h_n = S_{t_{n-1}} + h_n$.  Now $\bm {s_{n-1} +
    h_n}$ is either $w_n - a_n$ or $c$, each with probability $1/2$.
  If $\bm{s_{n-1} + h_n}=c$, then $\bm{s_n} - \bm{s_{n-1} + h_n} = w_n
  - c<0$.  By strong Markov property of $\bm{}$, $\bm{s_n} -
  \bm{s_{n-1} + h_n}$ are independent normal random variables with
  mean 0, possibly degenerate, so the probability that the procedure
  does \emph{not\/} stop at the $n\th$ iteration is at most
  $\pr\{\bm{s_{n-1} + h_n} = w_n - a_n\} + \pr\{\bm{s_{n-1}+h_n} = c,
  z_n - c<0\} = 3/4$.  It follows that the probability to have
  infinite iterations is 0.  Then $\pr(E) = 0$.
\end{proof}

\subsection{Examples} \label{ss:example}
\begin{example} \rm
  Let $X$ be a symmetric \levy process with \levy density $\lambda(x)
  = c\cf{0<|x|< r} x^{-\alpha-1}$, $r\in (0,\infty)$.  If $\alpha\in
  (0,1)$ and the Brownian coefficient $\delta$ of $X$ is 0, then $X$
  has finite variation.  In this case, it has been shown that
  $(\varsigma, X_{\varsigma-}, X_\varsigma)$ can be sampled exactly
  \cite{chi:12:advap, chi:16:spa}.  On the other hand, if $\delta
  \ne0$ or $\alpha\in [1,2)$, $X$ has infinite variation.  In this
  case, $X$ can be embedded into $Z=\bm S$ as in \eqref{e:X-Z}, where
  $S$ is a subordinator with drift coefficient $\delta^2$ and \levy
  density $c\cf{x>0} x^{-\alpha/2 - 1} / d_{\alpha/2}$, where for
  $z>0$, $d_z = \Gamma(z + 1/2) 2^z / \sqrt\pi$.  The first passage
  event of $S$ can be sampled exactly \cite{chi:12:advap, chi:16:spa}.
  By combining this with the results in following sections, Algorithm
  \ref{a:fet} can sample $(\varsigma, X_{\varsigma-}, X_\varsigma)$.
\end{example}

\begin{example}\rm
  Suppose instead that $S$ is a subordinator with exponentially tilted
  \levy density $c\cf{x>0} e^{-s x} x^{-\alpha/2-1} / d_{\alpha/2}$,
  where $s>0$ and $\alpha\ge 1$.  The first passage event of $S$ can
  be sampled exactly \cite{chi:12:advap, chi:16:spa}.  On the other
  hand, the \levy density of $\bm S$ is
  \begin{align*}
    \lambda_0(x)
    &=
    \frac{c}{d_{\alpha/2}} \intzi 
    \frac{e^{-x^2/2 u}}{\sqrt{2\pi u}} 
    e^{-s u} u^{-\alpha/2-1}\,\dd u
    \\
    &=
    c \cf{x>0} x^{-\alpha - 1}
    +
    \frac{c}{d_{\alpha/2}} \int^1_0
    \frac{e^{-x^2/2 u}}{\sqrt{2\pi u}} 
    (e^{-s u} - 1) u^{-\alpha/2-1}\,\dd u + O(1)
    \\
    &=
    c \cf{x>0} x^{-\alpha - 1}
    -
    \frac{c s}{d_{\alpha/2}} \int^1_0
    \frac{e^{-x^2/2 u}}{\sqrt{2\pi}} u^{-\alpha/2-1/2}\,\dd u +O(1+s^2),
  \end{align*}
  where the implicit constant in $O(1+s^2)$ only depends on $(\alpha,
  c)$.  By variable substitute $v = x^2/(2u)$,
  \begin{align*}
    \int^1_0
    \frac{e^{-x^2/2 u}}{\sqrt{2\pi}} u^{-\alpha/2-1/2}\,\dd u 
    &=
    \frac 1{\sqrt{2\pi}} \Grp{\frac 2{x^2}}^{\alpha/2 - 1/2} 
    \int^\infty_{x^2/2} e^{-v} v^{\alpha/2 - 3/2}\,\dd v
    \\
    &\sim
    \begin{cases}
      d_{\alpha/2-1} x^{-\alpha+1} & \alpha\in(1,2)
      \\
      -\ln|x|/\sqrt{2\pi} & \alpha = 1
    \end{cases} \quad \text{as}\ x\to 0.
  \end{align*}
  Therefore, if $\alpha\in (0,1)$ and $X$ has \levy density
  $\lambda(x) = c\cf{0<|x|<r} (1 - C x^2) x^{-\alpha-1}$, where $C>0$,
  then by choosing $s>0$ large enough and $r'\in (0,r]$ small enough,
  $\lambda$ can be written as $\lambda(x) = \cf{0<|x|<r'} \lambda_0(x)
  + \chi(x)$, where $\chi$ is the \levy density of a compound Poisson
  process.  Then, as noted just before Algorithm \ref{a:fet}, a
  routine extension of the procedure is able to sample $(\varsigma,
  X_{\varsigma-}, X_\varsigma)$.  The case $\alpha=1$ can be similarly
  dealt with.
\end{example}

\begin{example}\rm
  Let $X$ be a symmetric \levy process with $\lambda(x) = c \cf{0<|x|
    < r} e^{-x/\beta} / x$ and Brownian coefficient $\delta$.  It has
  been shown that if $\delta = 0$, then $(\varsigma, X_{\varsigma-},
  X_\varsigma)$ can be sampled exactly \cite {chi:12:advap,
    chi:16:spa}.  Note that $\lambda$ is a truncated version of the
  \levy density of $U-D$, where $U$ and $D$ are independent Gamma
  processes with \levy density $\lambda_0(x) = c\cf{x>0}
  e^{-x/\beta}/x$.  It is well known that $U-D \sim \bm V$, where $V$
  is a Gamma process with \levy density $c\cf{x>0} e^{-x/2\beta}/x$
  (cf.\ \cite{glasserman:04:sv-ny}, p.~143-144).  Assume now that
  $\delta>0$.  Then $X$ can be embedded in $Z = \bm S$, where $S_t =
  \delta^2 t + V_t$.  The exactly sampling of the first passage event
  of $S$ has been shown in \cite{chi:16:spa}.  Then Algorithm
  \ref{a:fet} can be used to sample $(\varsigma, X_{\varsigma-},
  X_\varsigma)$.
\end{example}

\section{Sampling of first exit time of a Brownian motion}
\label{s:bm-fet}
In this section, denote
\begin{align*}
  \psi(x) = x e^{-x^2/2}, \quad C_0 = 2/\sqrt e.
\end{align*}
Then $x e^{-x y/2} \le C_0 e^{-y/2}$ for all $x$, $y\ge 1$, in
particular,
\begin{align} \label{e:h}
  \psi(x y) = x y e^{-x^2 y^2/2} \le C_0 y e^{-x y^2/2}.
\end{align}

For $s>0$ and $k\ge 0$, denote
\begin{align} \label{e:dk}
  d_k(s) = \psi((4k+1)\sqrt{2s}) - \psi((4k+3)\sqrt{2s}).
\end{align}
Given $a>0$, for $t>0$, by \eqref{e:b-fet2},
\begin{align} \label{e:b-fet2a}
  \dfe a(t,0)
  &=
  \frac \pi{2a^2\sqrt {2 x}}
  \sumzi k d_k(x), \qquad
  \text{with}\ 
  x = \frac{\pi^2 t}{8 a^2},
\end{align}
and by \eqref{e:b-fet1},
\begin{align} \label{e:b-fet1a}
  \dfe a(t,0)
  &=
  \sqrt{\frac 2\pi} \frac {2 y}{a^2}
  \sumzi k d_k(y),
  \qquad
  \text{with}\ 
  y = \frac{a^2}{2t}.
\end{align}

Let $X = \pi^2\fet a/(8 a^2)$.  Then by \eqref{e:b-fet2a}, the \pdf~of
$X$ is
\begin{align} 
  \dfh X(x)
  &=
  (8 a^2/\pi^2) \times \dfe a(8 a^2 x/\pi^2)
  \nonumber \\
  &=
  A \times
  \frac{e^{-x}}{\pr\{\xi\ge \pi^2/8\}}
  \times \frac{1-e^{-\pi^2/2}}{1 - e^{-4x}}
  \sumzi k
  (1-e^{-4x}) e^{-4 k x} \times
  \frac{d_k(x)}{C_0 \sqrt{2 x} e^{-(4 k + 1)x}},
  \label{e:fet-b3}
\end{align}
where $\xi \sim \dgamma(1)$ and
\begin{align} \label{e:b-fet-A}
  A = 
  \frac{4C_0 \pr\{\xi\ge \pi^2/8\}}{\pi (1-e^{-\pi^2/2})}.
\end{align}
For $t\ge a^2$, $x\ge \pi^2/8$, so $e^{-x}/e^{-\pi^2/8}$ is the
\pdf~of $\xi$ at $x$ conditional on $\xi\ge \pi^2/8$ and
$(1-e^{-\pi^2/2})/(1-e^{-4x})\le 1$.  Next, $(1-e^{-4x}) e^{-4kx} =
\pr\{\kappa = k\}$ for $\kappa \sim \dgeo(e^{-4 x})$.  Finally, since
$\sqrt{2 x}>1$ and $\psi$ is positive and strictly decreasing on
$[1,\infty)$,
\begin{align*}
  0< \frac{d_k(x)}{C_0 \sqrt{2 x} e^{-(4 k + 1) x}} <1,
\end{align*}
where the second inequality uses \eqref{e:h}.  As a result, \eqref
{e:fet-b3} implies that rejection sampling can be used to sample
$X$ conditional on $X\ge \pi^2/8$, and hence to sample
$\fet a$ conditional on $\fet a\ge a^2$.

Likewise, by \eqref{e:b-fet1a}, the \pdf~of $Y = a^2/(2\fet a)$ is
\begin{align} 
  \dfh Y(y)
  &=
  a^2/(2 y^2) \times \dfe a(a^2/(2 y),0)
  \nonumber\\
  &=
  B
  \times
  \frac{e^{-y}/\sqrt{\pi y}}{\pr\{\zeta>1/2\}}
  \times
  \frac{1-e^{-2}}{1-e^{-4y}}
  \sumzi k (1-e^{-4y}) e^{-4 k y} 
  \times
  \frac{d_k(y)}{C_0 \sqrt{2 y} e^{-(4 k + 1)y}},
  \label{e:fet-a3}
\end{align}
where $\zeta\sim \dgamma(1/2)$ and
\begin{align} \label{e:b-fet-B}
  B = \frac{2 C_0 \pr\{\zeta>1/2\}}{1-e^{-2}}.
\end{align}
As a result, rejection sampling can be used to sample $Y$ conditional
on $Y \ge 1/2$, and hence to sample $\fet a$ conditional on $\fet a
\le a^2$.

Recall that $\pr^x$ denotes probability measure under which $\bm0\equiv
x$.  The above results lead to the rejection sampling Algorithm
\ref{a:bfet}.

\begin{algorithm}
  \caption{Sampling $\fet a$ under $\pr^0$}
  \label{a:bfet}
  \begin{algorithmic}[1]
    \REQUIRE $a\in (0,\infty)$, $C_0 = 2/\sqrt e$, $d_k(\cdot)$ as in
    \eqref{e:dk}, $A$ and $B$ as in \eqref{e:b-fet-A} and \eqref
    {e:b-fet-B}, respectively
    \WHILE{(1)}
    \STATE Sample $\eno U 5$ \iid~$\sim \dunif(0,1)$
    \IF {$U_1\le A/(A+B)$}
    \STATE Sample $\xi\sim \dgamma(1)$ conditional on $\xi\ge \pi^2/8$,
    then sample $\kappa\sim \dgeo(e^{-4\xi})$
    \IF {$U_2 (1-e^{-4\xi}) \le 1-e^{-\pi^2/2}$ and $C_0 U_3 \sqrt{2
        \xi} e^{-(4\kappa + 1)\xi} \le d_\kappa(\xi)$
    }
    \RETURN $8 a^2\xi / \pi^2$
    \ENDIF
    \ELSE
    \STATE Sample $\zeta\sim \dgamma(1/2)$ conditional on $\zeta>1/2$,
    then sample $\kappa \sim \dgeo(e^{-4\zeta})$
    \IF {$U_4 (1 - e^{-4\zeta}) \le 1-e^{-2}$ and
      $C_0 U_5 \sqrt{2 \zeta} e^{-(4\kappa + 1) \zeta} \le
      d_\kappa(\zeta)$}
    \RETURN $a^2/(2\zeta)$
    \ENDIF
    \ENDIF
    \ENDWHILE
  \end{algorithmic}
\end{algorithm}

\section{Distribution of pre-exit location of a Brownian motion}
\label{s:pre-exit}
We need a few more properties of the first exit event of a Brownian
motion.  For $t>0$ and $x\in (-a,a)$, it is known that
\begin{align*}
  \pr^x\{\fet a \in \dd t,\, \bm{\fet a} = \pm a\}&
  = \dfe a^\pm(t,x)\,\dd t,
\end{align*}
where
\begin{align}\label{e:b-fet-top1}
  \dfe a^+(t,x)
  =
  \dfe a^-(t,-x)
  =
  \sumzi k [\dfh {4ka+a-x}(t) - \dfh {4ka+3a+x}(t)]
\end{align}
(\cite{borodin:02:bvb}, p.~212, 3.0.6).  We also have the following.

\begin{prop} \label{p:b-fet}
  For all $t>0$ and $x\in (-a,a)$
  \begin{align}
    \label{e:b-fet-top2}
    \dfe a^+(t,x)
    =
    \dfe a^-(t,-x)
    &=
    \frac 1 2 \dfe a(t,x)
    -\frac\pi{2a^2}
    \sumoi k(-1)^k k \exp\Cbr{-\frac{k^2\pi^2 t}{2 a^2} } \sin
    \frac{k\pi x}{a}.
  \end{align}
  Furthermore, $\dfe a^\pm(t,x)>0$.
\end{prop}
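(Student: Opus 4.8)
The plan is to read off \eqref{e:b-fet-top2} from the heat-equation eigenfunction representation recalled in Section~\ref{s:basics}, applied not to $g\equiv 1$ (which gives \eqref{e:b-fet2}) but to the linear function that records exit through the upper endpoint, and then to treat strict positivity by a separate argument. For $x\in(-a,a)$ write $w(t,x)=\pr^x\{\fet a\le t,\ \bm{\fet a}=a\}$, so that $\dfe a^+(t,x)=\partial_t w(t,x)$, and let $h(x)=(x+a)/(2a)$ be the linear, hence harmonic, function on $(-a,a)$ with $h(a)=1$, $h(-a)=0$. Since $h(\bm t)$ is a martingale, optional stopping gives $h(x)=\mean^x[h(\bm{\fet a})]=\pr^x\{\bm{\fet a}=a\}$, and the strong Markov property at the fixed time $t$ yields $h(x)=w(t,x)+\mean^x[h(\bm t)\cf{t<\fet a}]$. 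Setting $v(t,x)=\mean^x[h(\bm t)\cf{t<\fet a}]$ we obtain $w=h-v$ and therefore
\[ \dfe a^+(t,x)=-\partial_t v(t,x). \]

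By the representation in Section~\ref{s:basics}, $v$ is the bounded solution of the heat equation on $(-a,a)$ with initial datum $h$ and zero boundary values, so \eqref{e:heat-eigen} applies. I would split $h=\tfrac12+\tfrac{x}{2a}$. The constant $\tfrac12$ contributes $\tfrac12\pr^x\{\fet a>t\}$ to $v$, and $-\partial_t$ of this is exactly $\tfrac12\dfe a(t,x)$, by the very computation that produces \eqref{e:b-fet2}. For the odd part $x/(2a)$ only the even-index eigenfunctions $\varphi_{2j}(x/2a)=\sin(j\pi x/a)$ survive, with coefficients
\[ \alpha_{2j}=a^{-1}\int^a_{-a}\frac{x}{2a}\sin\frac{j\pi x}{a}\,\dd x=\frac{(-1)^{j+1}}{j\pi}, \]
so, the time factor for index $2j$ being $\exp\{-(2j)^2\pi^2 t/(8a^2)\}=\exp\{-j^2\pi^2 t/(2a^2)\}$, this part contributes $\sumoi j\frac{(-1)^{j+1}}{j\pi}\exp\Cbr{-\frac{j^2\pi^2 t}{2a^2}}\sin\frac{j\pi x}{a}$ to $v$. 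Applying $-\partial_t$ multiplies the $j$-th summand by $j^2\pi^2/(2a^2)$ and gives $-\frac{\pi}{2a^2}\sumoi j(-1)^j j\exp\Cbr{-\frac{j^2\pi^2 t}{2a^2}}\sin\frac{j\pi x}{a}$, which together with $\tfrac12\dfe a(t,x)$ is \eqref{e:b-fet-top2}. Termwise differentiation is legitimate because the $\exp\{-cj^2 t\}$ factors make the series and all its $t$-derivatives converge locally uniformly on $t>0$; the identity $\dfe a^+(t,x)=\dfe a^-(t,-x)$ is the parity already recorded in \eqref{e:b-fet-top1}.

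The strict positivity $\dfe a^\pm(t,x)>0$ is the main obstacle, since neither \eqref{e:b-fet-top1} nor \eqref{e:b-fet-top2} has terms of one sign, so positivity cannot be read off termwise (the unimodality of $\psi$ in \eqref{e:h} makes the leading image term in \eqref{e:b-fet-top1} potentially smaller than the next, negative, term for $x$ near $a$). Nonnegativity is immediate because $\dfe a^+(t,x)\,\dd t$ is a sub-probability measure. For strictness I would exploit that, by the rapidly converging series, $\dfe a^+$ is jointly real-analytic in $(t,x)$ on $(0,\infty)\times(-a,a)$ and is not identically zero; hence its zero set is Lebesgue-null, and for a.e.\ $t>0$ the nonnegative real-analytic function $y\mapsto\dfe a^+(t,y)$ is not identically zero and so vanishes only on a discrete set. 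Given an arbitrary $(t,x)$, pick $s\in(0,t)$ with $\dfe a^+(t-s,\cdot)>0$ Lebesgue-a.e.; the Markov property at time $s$ gives
\[ \dfe a^+(t,x)=\int^a_{-a} p_s(x,y)\,\dfe a^+(t-s,y)\,\dd y, \]
where $p_s(x,y)$ is the density of $\bm s$ killed on exiting $(-a,a)$, strictly positive for $x,y\in(-a,a)$ since the Brownian bridge from $x$ to $y$ stays inside with positive probability. The integrand is nonnegative and positive on a set of positive measure, so $\dfe a^+(t,x)>0$; the bound for $\dfe a^-$ follows from $\dfe a^-(t,x)=\dfe a^+(t,-x)$.

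As an alternative route for positivity, differentiating the absorbed heat kernel at the boundary expresses $\dfe a^+(t,x)=-\partial_z\Theta_t(z)\big|_{z=a-x}$, where $\Theta_t(z)=\sum_{k\in\mathbb{Z}}\dn_t(z+4ka)$ is the wrapped-normal density of period $4a$, so that positivity becomes strict monotonicity of $\Theta_t$ on $(0,2a)$. That unimodality is classical but, uniformly over all $t>0$, does not appear easier than the propagation argument above (the Fourier coefficients $k\exp\{-ck^2t\}$ of $\Theta_t'$ are not monotone for small $t$), which is why I would take the Chapman--Kolmogorov argument as the primary one.
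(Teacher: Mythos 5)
Your derivation of \eqref{e:b-fet-top2} is essentially the paper's own argument: both apply the Markov property at the deterministic time $t$, use $\pr^y\{\bm{\fet a}=a\}=(a+y)/(2a)$ (which you rederive via optional stopping rather than citing), and expand $\mean^x[h(\bm t)\cf{t<\fet a}]$ by the eigenfunction formula \eqref{e:heat-eigen}; whether one expands $g(x)=x$ and adds $\tfrac12\pr^x\{\fet a>t\}$ separately, as the paper does, or expands $g=h=\tfrac12+\tfrac{x}{2a}$ and splits into even and odd parts, as you do, is immaterial — the coefficients and the termwise differentiation in $t$ are identical. Where you genuinely depart from the paper is the strict positivity of $\dfe a^\pm(t,x)$. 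The paper proves it by first getting positivity for small $t$ from $\dfe{x,a}^+ \ge \dfh{a-x} - \dfh{3a+x}$, and then excluding a first zero $t_0>0$ through the path-decomposition convolution inequality $\dfe{x,a}^+\ge \dfe{x',a'}^+ * \dfe{x'',a''}^- * \dfe{x,a}^+$, which forces a contradiction. You instead combine (i) nonnegativity (it is a density), (ii) joint real-analyticity of $\dfe a^+$ on $(0,\infty)\times(-a,a)$, so its zero set is Lebesgue-null and, by Fubini, $\dfe a^+(u,\cdot)>0$ a.e.\ for a.e.\ $u>0$, and (iii) the Chapman--Kolmogorov identity $\dfe a^+(t,x)=\int^a_{-a}p_s(x,y)\,\dfe a^+(t-s,y)\,\dd y$ with the killed transition density $p_s(x,y)>0$ (Brownian bridge support argument). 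This is correct: the analyticity follows from the rapidly convergent series, $p_s(x,y)>0$ is standard, and the displayed identity — which the Markov property gives a priori only as an equality of measures in $\dd t$ — holds pointwise because both sides are continuous in $t$, a routine detail you should state. Your route buys generality: positivity propagating through a strictly positive kernel is a soft principle that would survive in higher dimensions or in domains where image-series manipulations are unwieldy (a setting the paper itself raises in its closing comments). The paper's argument, in exchange, is entirely elementary and self-contained, using nothing beyond \eqref{e:b-fht}, \eqref{e:b-fet-top1}, and a path decomposition, with no appeal to analyticity or to properties of killed kernels. Both proofs are valid.
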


\begin{proof}
  To start with, by the Markov property of $\bm{}$,
  \begin{align*}
    \pr^x\{\fet a > t,\, \bm{\fet a} = a\}
    =
    \mean^x [\cf{t<\fet a} \cf{\bm{\fet a}=a}]
    =
    \mean^x[\cf{t<\fet a} \pr^{\bm t}\{\bm{\fet a}'=a\}],
  \end{align*}
  where $\bm{}'$ is an \iid~copy of $\bm{}$.  By $\pr^y(\bm{\fet a}=a)
  = (a+y)/(2a)$, $y\in (-a,a)$ (\cite {borodin:02:bvb}, p.~212,
  3.0.4),
  \begin{align*}
    \pr^x\{\fet a> t,\, \bm{\fet a}=a\}
    =
    \mean^x[\cf{t<\fet a} (a+\bm t)/(2a)]
    =
    \frac 1 2 \pr^x\{t<\fet a\} +
    \frac1{2a}\mean^x[\bm t\cf{t<\fet a}].
  \end{align*}
  Put $u(t,x) = \mean^x[\bm t\cf{t<\fet a}]$.  Using \eqref
  {e:heat-eigen} and the fact that $x$ is antisymmetric on $(-a,a)$,
  \begin{align*}
   u(t,x)
   &=
   \sumoi k \alpha_k \exp\Cbr{-\frac{k^2\pi^2 t}{2a^2}} \sin\frac{k\pi
     x}{a},
  \end{align*}
  where
  \begin{align*}
    \alpha_k
    = \frac 1 a \int^a_{-a} x \sin\frac{k\pi x}{a}\,\dd x
    = (-1)^{k-1} \frac {2a}{k\pi}.
  \end{align*}
  Then 
  \begin{align*}
    \pr^x\{\fet a>t,\, \bm{\fet a}=a\}
    = \frac 1 2\pr^x\{\fet a>t\} - \frac 1\pi
    \sumoi k \frac{(-1)^k} k 
    \exp\Cbr{-\frac{k^2\pi^2 t}{2a^2}} \sin\frac{k\pi x}{a}.
  \end{align*}
  Differentiating both sides in $t$ then yields \eqref{e:b-fet-top2}.

  To show $\dfe a^+(t,x)>0$ for all $x\in (-a,a)$, regard $x$ as a
  parameter while $t$ the only variate.  Then denote $\dfe {x,
    a}^\pm(t) = \dfe a^\pm(t,x)$.  By \eqref{e:b-fet-top1}, $\dfe{x,
    a}^+ = \dfh {a-x} - \dfh {2a} * \dfe{x,a}^-$, $\dfe{x,a}^- = \dfh
  {a+x} - \dfh {2a}* \dfe{x,a}^+$.  Then $\dfe{x,a}^- \le \dfh {a+x}$,
  so $\dfe{x,a}^+ \ge \dfh {a-x} - \dfh {2a}* \dfh {a+x} = \dfh {a-x}
  - \dfh {3 a + x}$.  From \eqref{e:b-fht}, it follows that $\dfe{x,
    a}^+(t)>0$ for all small $t>0$.  Likewise, $\dfe{x,a}^-(t)$ for
  all small $t>0$.  Assume $\{t>0: \dfe{x,a}^+(t) = 0\} \ne \emptyset$
  and let $t_0$ be the infimum of the set.  Then $t_0>0$.  Since
  $\dfe{x,a}^+$ is smooth, $\dfe{x,a}^+(t_0)=0$.  Fix $c\in (x, a)$.
  Under $\pr^x$, in order for $\bm{}$ to reach $a$ before reaching
  $-a$, it may first reach $c$ before reaching $-a$, then starting at
  $c$, return to $x$ before reaching $a$, and finally, starting at $x$
  again, reach $a$ before reaching $-a$.  As a result, $\dfe{x,a}^+
  \ge \dfe{x',a'}^+ * \dfe{x'',a''}^- * \dfe{x,a}^+$, where $x' =
  x+(a-c)/2$, $a' = (c+a)/2$, $x'' = c - (a+x)/2$, $a'' = (a-x)/2$.
  Since $\dfe{x,a}^+(t) > 0$ for all $0<t<t_0$, it follows that
  $\dfe{x',a'}^+ * \dfe{x'',a''}^-(t)=0$ for all $t<t_0$.  However
  $\dfe{x',a'}^+(t)>0$ and $\dfe{x'', a''}^-(t)>0$ for all small $t$.
  The contradiction shows $\dfe{x,a}^+$ is strictly positive.
  Likewise, $\dfe{x,a}^-$ is strictly positive.
\end{proof}

\begin{prop} \label{p:b-fet-dx}
  Fix $t>0$ and $a>0$.  Then for $x\in (-a,a)$,
  \begin{align} \label{e:p-x}
    \dfe a^+(t,x)
    \le
    c_{\dfe{}} \min\{\dn_t(a-x) (a-x),\ \dn_t(a-|x|) (a+x)\},
  \end{align}
  where 
  \begin{align} \label{e:const-1}
    c_{\dfe{}}
    =
    2 t^{-1} \sumzi k \Sbr{\frac{(2ka+4a)^2} t + 1} e^{-2 k^2 a^2/t}.
  \end{align}
  Furthermore,
  \begin{align} \label{e:p-dx+}
    \partial_x \dfe a^+(t,a)
    :=
    \lim_{x\to a-} \partial_x \dfe a^+(t, x)
    =
    -\frac {\pi^2}{8 a^3}
    \sumoi k k^2 \exp\Cbr{-\frac{k^2\pi^2t}{8a^2}}\in (-\infty, 0)
  \end{align}
  and
  \begin{align} \label{e:p-dx-}
    \partial_x \dfe a^+(t,-a)
    :=\lim_{x\to (-a)+} \partial_x \dfe a^+(t, x)
    =
    \frac {\pi^2}{8 a^3}
    \sumoi k (-1)^{k-1} k^2 \exp\Cbr{-\frac{k^2\pi^2t}{8a^2}}\in
    (0,\infty).
  \end{align}
\end{prop}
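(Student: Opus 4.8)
The plan is to handle the envelope bound \eqref{e:p-x} and the boundary derivatives \eqref{e:p-dx+}--\eqref{e:p-dx-} through two different representations of $\dfe a^+(t,x)$: the rapidly converging series \eqref{e:b-fet-top1} for the former, and the Fourier-type closed form \eqref{e:b-fet-top2} together with \eqref{e:b-fet2} for the latter.

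For \eqref{e:p-x}, I would start from \eqref{e:b-fet-top1} and use $\dfh c(t) = -\dn_t'(c)$ for $c>0$ (from \eqref{e:b-fht}), so that each difference $\dfh{c}(t)-\dfh{c'}(t)=\dn_t'(c')-\dn_t'(c)$ becomes, by the mean value theorem, $(c-c')\,\dn_t''(\xi)$ for an intermediate point $\xi$. The two estimates in the $\min$ arise from two pairings of the terms of \eqref{e:b-fet-top1}. Pairing $\dfh{4ka+a-x}$ with $\dfh{4ka+3a+x}$ (the difference as written) gives an argument gap $2(a+x)$, hence the factor $(a+x)$; regrouping so as to pair $\dfh{(4k+1)a-x}$ with the preceding $\dfh{(4k-1)a+x}$ leaves the leading term $\dfh{a-x}(t)=(a-x)t^{-1}\dn_t(a-x)$ unpaired and renders every remaining gap equal to $2(a-x)$, hence the factor $(a-x)$. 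In the first pairing the intermediate points obey $\xi_k\ge 2ka+(a-|x|)$ for all $x$, which is exactly what produces the decay $e^{-2k^2a^2/t}$ and the density $\dn_t(a-|x|)$; in the second, the analogous bound holds for $k\ge 2$ (and for $k=1$ when $x\ge 0$).

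The core estimate is $|\dn_t''(\xi)| = t^{-1}\dn_t(\xi)\,|\xi^2/t-1|\le t^{-1}\dn_t(\xi)(\xi^2/t+1)$, combined with the fact that the envelope $\xi\mapsto \dn_t(\xi)(\xi^2/t+1)$ is decreasing for $\xi>\sqrt t$ (with maximum $2/(\sqrt e\,\sqrt{2\pi t})$ attained at $\xi=\sqrt t$). Evaluating this envelope at $2ka+(a-|x|)$ and using $(2ka+(a-|x|))^2\ge 4k^2a^2+(a-|x|)^2$ separates off $e^{-2k^2a^2/t}\dn_t(a-|x|)$ and bounds the polynomial part by $(2ka+4a)^2/t+1$; summing over $k$ reproduces the constant $c_{\dfe{}}$ in \eqref{e:const-1}. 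I expect this constant-matching to be the main obstacle: because $\dn_t''$ changes sign, the estimate must be routed through the envelope rather than a monotone bound, and a few low-index terms---those with $2ka+(a-|x|)\le\sqrt t$, together with the $k=1$, $x<0$ term in the $(a-x)$ pairing---fall outside the clean regime and have to be checked against the (ample) slack already present in $c_{\dfe{}}$ separately.

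For \eqref{e:p-dx+} and \eqref{e:p-dx-} I would instead differentiate \eqref{e:b-fet-top2} and \eqref{e:b-fet2} term by term in $x$; the factors $e^{-k^2\pi^2t/(2a^2)}$ and $e^{-(2k+1)^2\pi^2t/(8a^2)}$ make the differentiated series converge uniformly for each fixed $t>0$, justifying the interchange. Evaluating the derivatives of the trigonometric factors at $x=\pm a$ (using $\sin((2k+1)\pi/2)=(-1)^k$, $\cos(k\pi)=(-1)^k$) and then splitting the single sum $\sum k^2 e^{-k^2\pi^2t/(8a^2)}$ into its odd and even parts yields the stated expressions. Finiteness of both series is immediate from the Gaussian decay. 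For the signs, $\dfe a^+(t,\cdot)\ge 0$ vanishes at both endpoints, so the one-sided derivatives carry the claimed signs; strict negativity in \eqref{e:p-dx+} is plain since every term of $\sum k^2 e^{-k^2\pi^2t/(8a^2)}$ is positive, while strict positivity of the alternating series in \eqref{e:p-dx-} (together with the interior positivity of Proposition \ref{p:b-fet}) can be secured by recognizing that series as a multiple of the derivative of a Jacobi theta value and applying the modular/Poisson transform, which rewrites it as a sum of positive terms.
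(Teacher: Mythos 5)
Your overall architecture is in fact the paper's: the bound \eqref{e:p-x} is extracted from the two pairings of the series \eqref{e:b-fet-top1} via the mean value theorem, the identities \eqref{e:p-dx+}--\eqref{e:p-dx-} come from differentiating \eqref{e:b-fet-top2} and \eqref{e:b-fet2}, and your ``Poisson transform'' of the alternating series is exactly the second representation (the differentiated \eqref{e:b-fet-top1}) that the paper plays off against the Fourier one. But two of your steps fail as stated. For \eqref{e:p-x}, your premise that the estimate ``must be routed through the envelope'' $\xi\mapsto\dn_t(\xi)(\xi^2/t+1)$ is mistaken, and it is what manufactures your difficulty. Since $|\dn_t''(\xi_k)|\le t^{-1}(\xi_k^2/t+1)\dn_t(\xi_k)$, one can bound the two factors \emph{separately}: the polynomial factor by its value at the upper end of the interval containing $\xi_k$, and $\dn_t(\xi_k)$ by its value at the lower end; each factor is monotone on all of $(0,\infty)$, so no condition $\xi_k>\sqrt t$ ever enters, and this is precisely how the paper arrives at \eqref{e:const-1} with no exceptional terms. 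Your route instead excludes every pair whose interval dips below $\sqrt t$; these are not ``a few low-index terms'' --- by your own description they are the $k$ with $2ka+(a-|x|)\le \sqrt t$, of which there are of order $\sqrt t/a$, unboundedly many as $a/\sqrt t\to 0$ --- and the assertion that the slack in $c_{\dfe{}}$ absorbs them is never verified. This matters because the proposition is not an ``up to a constant'' statement: the specific value \eqref{e:const-1} is reused in Corollary \ref{c:fet-dx} and in the acceptance ratios \eqref{e:rk} of Algorithm \ref{a:bm-x}, so the bookkeeping must close exactly. (It probably can be made to close, e.g.\ by using the increasing branch of the envelope for pairs lying below $\sqrt t$ and noting that at most one pair straddles $\sqrt t$, but no such argument appears in your proposal.)

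The positivity claim for \eqref{e:p-dx-} is wrong as stated. Applying Poisson summation to $\sum_{k\ge 1}(-1)^{k-1}k^2e^{-k^2s}$ with $s=\pi^2 t/(8a^2)$ yields a series proportional to $\sum_{k\ge 1,\,k\text{ odd}}(4a^2k^2/t-1)\,e^{-2a^2k^2/t}$, i.e.\ exactly the paper's second representation obtained by differentiating \eqref{e:b-fet-top1} at $x=-a$. Its terms are all positive only when $t\le 4a^2$; for $t>4a^2$ the leading term is negative, so the modular transform does \emph{not} rewrite the quantity as a sum of positive terms. The repair is a case split, which is the paper's proof: for $t\ge 8a^2/\pi^2$ the original alternating series is positive because $x\mapsto xe^{-sx}$ is decreasing on $[1,\infty)$ when $s\ge 1$, while for $t<8a^2/\pi^2$ (hence $t<4a^2$) the transformed series is termwise positive; since $8a^2/\pi^2<4a^2$, the two regimes cover all $t>0$. (Alternatively, unconditional positivity does follow from theta-function structure, but through the Jacobi triple product rather than the transform: your series equals $\tfrac12\,d\vartheta_4(0,e^{-s})/ds$, and $\vartheta_4(0,e^{-s})=\prod_{n\ge1}(1-e^{-2ns})(1-e^{-(2n-1)s})^2$ is a product of factors each increasing in $s$. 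That, however, is not the argument you gave.)
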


\begin{proof}
  Recall $t \dfh y(t) = y \dn_t(y)$.   Let $y = a-x$.  Then by \eqref
  {e:b-fet-top1}, $\dfe a^+(t,x) = \dfh y(t) - t^{-1} g(y)$, where
  \begin{align*}
    g(y)
    =
    t\sum_{k\ge 2\text{ even}} [\dfh {2ka-y}(t) -
    \dfh {2ka+y}(t)].
  \end{align*}
  By $g(0)=0$, $g(y) = g'(\theta y) y$ for some $\theta = \theta(y)\in
  (0,1)$.  Then $\dfe a^+(t,x) \le t^{-1} y [\dn_t(y) + |g'(\theta
  y)|]$.   By $t\partial_y \dfh y(t) = -(y^2/t-1) \dn_t(y)$,
  \begin{align*}
    |g'(\theta y)|
    &\le
    \sum_{k\ge 2\text{ even}} \Cbr{
      \Sbr{\frac{(2ka-\theta y)^2} t + 1} \dn_t(2ka-\theta y)
      +
      \Sbr{\frac{(2ka+\theta y)^2} t + 1} \dn_t(2ka+\theta y)
    }.
  \end{align*}
  Since $y\in (0,2a)$, for $k\ge 2$, $2ka - \theta y \ge 2(k-2)a+y>0$
  and $2ka + \theta y \ge 2(k-1)a + y> 0$.  Also, for any $u\ge 0$,
  $\dn_t(u+y) \le e^{-u^2/(2t)} \dn_t(y)$.  Then
  \begin{align*}
    |g'(\theta y)|
    &\le
    \sum_{k\ge 2\text{ even}} \Cbr{
      \Sbr{\frac{(2ka)^2} t + 1}  \dn_t(2(k-2)a + y)
      +
      \Sbr{\frac{(2ka + 2a)^2} t + 1}
      \dn_t(2(k-1)a + y)
    }
    \\
    &\le
    \sum_{k\ge 2\text{ even}} \Cbr{
      \Sbr{\frac{(2ka)^2} t + 1} e^{-2(k-2)^2 a^2/t} 
      +
      \Sbr{\frac{(2ka+ 2a)^2} t + 1} e^{-2(k-1)^2 a^2/t} 
    }
    \dn_t(y).
  \end{align*}
  It follows that $\dfe a^+(t,x) \le c_{\dfe{}} y \dn_t(y) =
  c_{\dfe{}}(a-x) \dn_t(a-x)$.

  Let $z = a+x$.  Then by \eqref {e:b-fet-top1}, $\dfe a^+(t,x) =
  t^{-1} h(z)$, where
  \begin{align*}
    h(z) = 
    t\sum_{k\ge 1\text{ odd}} [\dfh {2ka - z}(t) - \dfh {2ka+z}(t)].
  \end{align*}
  Since $h(0)=0$, there is $\theta = \theta(z)\in (0,1)$ such that
  $h(z) = h'(\theta z) z$.  Similar to the above argument,
  \begin{align*}
    |h'(\theta z)|
    &\le
    \sum_{k\ge 1\text{ odd}}
    \Cbr{
      \Sbr{\frac{(2ka-\theta z)^2} t + 1} \dn_t(2ka-\theta z)
      +
      \Sbr{\frac{(2ka+\theta z)^2} t + 1} \dn_t(2ka+\theta z)
    } 
    \\
    &=
    \Sbr{\frac{(2a-\theta z)^2} t + 1} \dn_t(2a-\theta z)
    + I_1 + I_2
    \\
    &\le
    [(2a)^2/t+1] \dn_t(2a-z) + I_1 + I_2,
  \end{align*}
  where
  \begin{align*}
    I_1
    &=
    \sum_{k\ge 3\text{ odd}}
    \Sbr{\frac{(2ka-\theta z)^2} t + 1} \dn_t(2ka-\theta z)
    \le
    \sum_{k\ge 3\text{ odd}}
    \Sbr{\frac{(2ka)^2} t + 1} \dn_t(2(k-2)a+ z),
    \\
    I_2
    &=
    \sum_{k\ge 1\text{ odd}}
    \Sbr{\frac{(2ka+\theta z)^2} t + 1} \dn_t(2ka+\theta z)
    \le
    \sum_{k\ge 1\text{ odd}}
    \Sbr{\frac{(2ka+2a)^2} t + 1} \dn_t(2(k-1) a+ z).
  \end{align*}
  Then $\dfe a^+(a,x) \le t^{-1}[(2a)^2/t + 1]\dn_t(a-x) z +
  (c_{\dfe{}}/2)\dn_t(z) z\le c_{\dfe{}} \dn_t(a-|x|) (a+x)$.

  The equalities in \eqref{e:p-dx+} and \eqref{e:p-dx-} can be shown
  by direct calculation using \eqref {e:b-fet2} and \eqref
  {e:b-fet-top2}.  Then it is clear that $\partial_x \dfe a^+(t, a)$ is
  strictly negative and finite and $\partial_x \dfe a^+(t,-a)$ is
  finite.  To show that the latter partial derivative is strictly
  positive, first, write it as
  \begin{align*}
    \frac{\pi^2}{8 a^3} \sum_{k\ge 1\text{ odd}}[g_s(k^2) -
    g_s((k+1)^2)],
  \end{align*}
  where $g_s(x) = x \exp\{-s x\}$ with $s=\pi^2 t/8a^2$.  If $s\ge 1$,
  i.e., $t\ge 8 a^2/\pi^2$, then $g_s(x)$ is strictly decreasing on
  $[1,\infty)$ and hence $\partial_x \dfe a^+(t, -a)>0$.  To cover the
  case $0<t< 8 a^2/\pi^2$, differentiate \eqref {e:b-fet-top1} instead.
  Using the expressions in \eqref{e:b-fht},
  \begin{align*}
    \partial_x \dfe a^+(t, -a)
    =
    \frac{2}{\sqrt{2\pi} t^{3/2}} \sum_{k\ge 1 \text{ odd}} h_s(k^2),
  \end{align*}
  where $h_s(x) = (s x-1) e^{-s x/2}$ with $s = 4 a^2/t$.  It is seen
  that if $0<t< 4a^2$, in particular, if $0<t< 8 a^2/\pi^2$, then the
  sum on the right hand side is strictly positive.
\end{proof}

From the proof of \eqref {e:p-x} it is seen that the following is
actually true.
\begin{cor} \label{c:fet-dx}
  Fix $t>0$ and $a>0$.  Let $c_{\dfe{}}$ be as in \eqref{e:const-1}.
  Then for $x\in (-a,a)$ and $n\ge 0$,
  \begin{align*}
    \Abs{
      \sum^n_{k=0} [\dfh {4ka+a-x}(t) - \dfh {4ka+3a+x}(t)]
      + \dfh {4(n+1)a+a-x}(t)
    } \le c_{\dfe{}} \dn_t(a-x)(a-x)
  \end{align*}
  and
  \begin{align*}
    \Abs{
      \sum^n_{k=0} [\dfh {4ka+a-x}(t) - \dfh {4ka+3a+x}(t)]
    } \le c_{\dfe{}} \dn_t(a-|x|)(a+x).
  \end{align*}
\end{cor}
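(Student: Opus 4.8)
The plan is to recycle the mean-value argument used to prove \eqref{e:p-x}, applied now to the partial sums rather than to the full series \eqref{e:b-fet-top1}. The whole point of that argument was that $\dfe a^+(t,x)$, written in the variable $y=a-x$ (resp.\ $z=a+x$), equals $\dfh y(t)-t^{-1}g(y)$ (resp.\ $t^{-1}h(z)$) for an auxiliary function that vanishes at the origin, after which a single application of the mean value theorem together with term-by-term Gaussian estimates produces the bound with constant $c_{\dfe{}}$. Since those estimates bound a sum of nonnegative terms, discarding the tail of the sum can only decrease them; this is exactly what lets one constant $c_{\dfe{}}$ serve \emph{all} truncation levels $n$ at once.

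For the first bound I would set $y=a-x$ and record the algebraic identity that the bracketed term indexed by $k$ equals $\dfh{4ka+y}(t)-\dfh{4(k+1)a-y}(t)$. Summing over $k=0,\dots,n$ and then adding the correction term $\dfh{4(n+1)a+a-x}(t)=\dfh{4(n+1)a+y}(t)$ collapses the expression into
\[
  \dfh y(t)-t^{-1}g_{n+1}(y),\qquad
  g_{n+1}(y):=t\sum_{m=1}^{n+1}[\dfh{4ma-y}(t)-\dfh{4ma+y}(t)],
\]
a truncation of the function $g$ from the proof of \eqref{e:p-x}. The sole role of the correction term is to restore $g_{n+1}(0)=0$, which the bare partial sum lacks (it equals $-\dfh{4(n+1)a}(t)$ at $y=0$). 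Because $g_{n+1}(0)=0$, the mean value theorem gives $g_{n+1}(y)=g_{n+1}'(\theta y)\,y$ for some $\theta\in(0,1)$, so using $\dfh y(t)=t^{-1}y\,\dn_t(y)$ the whole expression equals $t^{-1}y[\dn_t(y)-g_{n+1}'(\theta y)]$, whose absolute value is at most $t^{-1}y[\dn_t(y)+|g_{n+1}'(\theta y)|]$. Now $|g_{n+1}'(\theta y)|$ is bounded term by term exactly as $|g'(\theta y)|$ was in the proof of \eqref{e:p-x}, only over the finitely many indices $m\le n+1$; hence it is dominated by the same full series, and the bound $c_{\dfe{}}(a-x)\dn_t(a-x)$ follows.

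For the second bound I would set $z=a+x$ and note that the $k$-th bracketed term becomes $\dfh{(4k+2)a-z}(t)-\dfh{(4k+2)a+z}(t)$, so that the partial sum over $k=0,\dots,n$ is $t^{-1}h_n(z)$ with
\[
  h_n(z):=t\sum_{k=0}^{n}[\dfh{(4k+2)a-z}(t)-\dfh{(4k+2)a+z}(t)],
\]
a truncation of the function $h$ from the proof of \eqref{e:p-x}; here no correction term is needed, since $h_n(0)=0$ automatically. Applying the mean value theorem and bounding $|h_n'(\theta z)|$ term by term—with the $k=0$ term handled exactly as the isolated leading $\dfh{2a\mp z}$ term and the remaining terms absorbed into the analogues of $I_1$ and $I_2$—gives $|t^{-1}h_n(z)|\le c_{\dfe{}}(a+x)\dn_t(a-|x|)$, again with the \emph{same} constant because truncation only removes nonnegative summands.

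The only genuine work lies in the two re-indexings: recognizing the partial sums as truncations of $g$ and $h$, and in particular seeing that the extra summand $\dfh{4(n+1)a+a-x}(t)$ in the first statement is precisely the term that makes $g_{n+1}$ vanish at the origin so that the mean value theorem applies. The derivative estimates themselves need no modification, as they are already written as sums of nonnegative Gaussian bounds indexed by $k$ (or $m$), so restricting to a finite range yields a smaller quantity and leaves the constant $c_{\dfe{}}$ of \eqref{e:const-1} valid uniformly in $n$. A minor point worth stating explicitly is that, unlike $\dfe a^+$ itself, a partial sum may be negative, which is why the conclusion is phrased with absolute values; this costs nothing, since the mean-value estimate already controls $\dn_t(y)+|g_{n+1}'(\theta y)|$, an upper bound for $|\dn_t(y)-g_{n+1}'(\theta y)|$.
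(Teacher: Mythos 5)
Your proposal is correct and is essentially the paper's own argument: the paper proves the corollary only by the remark that it follows ``from the proof of \eqref{e:p-x},'' and your write-up is precisely the intended elaboration --- rewriting the partial sums in the variables $y=a-x$ and $z=a+x$ as truncations $g_{n+1}$, $h_n$ of the auxiliary functions $g$, $h$ (with the extra term $\dfh{4(n+1)a+a-x}(t)$ restoring $g_{n+1}(0)=0$ so the mean value theorem applies), and then observing that the termwise Gaussian bounds on $|g'|$ and $|h'|$ are sums of nonnegative quantities, so truncation only shrinks them and the same constant $c_{\dfe{}}$ of \eqref{e:const-1} works uniformly in $n$.
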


\begin{prop} \label{p:x}
  Given $a>0$,
  \begin{align*}
    \pr^0\{\bm t \in \dd x,  \fet a>t\}
    = 
    \cf{|x|<a} \dfex a(t,x)\,\dd x,
  \end{align*}
  where
  \begin{align} \label{e:pre-exit-BM}
    \dfex a(t,x)
    &=
    \sumii k (-1)^k \dn_t (x - 2k a)
    \\\label{e:pre-exit-BM-ft}
    &=
    \frac 1 a
    \sumzi k \exp\Cbr{-\frac{(2k+1)^2 \pi^2}{8 a^2} t}
    \cos\frac{(2k+1)\pi x}{2a}.
  \end{align}
  In addition, for all $x\in (-a,a)$, $\dfex a(t,x)>0$ and 
  \begin{align} \label{e:q-x}
    \dfex a(t,x) \le c_{\dfex{}} \dn_t(x) (a-|x|)
  \end{align}
  where $c_{\dfex{}} = 8 a t^{-1} \sumzi k (k+1) e^{-2k^2 a^2/t}$.
  Finally,
  \begin{align} \label{e:pre-exit-BM-edge}
    \dfex a(t,\pm a)=0, \qquad 
    \partial_x \dfex a(t, a) = -\partial_x q_a(t, -a) =
    -\dfe a(t,0)<0. 
  \end{align}
\end{prop}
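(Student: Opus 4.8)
The plan is to identify $\dfex a(t,x)$ with the transition density at $(0,x)$ of Brownian motion killed on leaving $(-a,a)$, i.e.\ the fundamental solution of the heat equation $\partial_t u=\tfrac12\partial_{xx}u$ on $(-a,a)$ with Dirichlet boundary condition and a unit point mass at $0$ as initial datum, and then to read off its two series representations. For \eqref{e:pre-exit-BM} I would verify that $u(t,x)=\sumii k(-1)^k\dn_t(x-2ka)$ has the three defining properties. It solves the heat equation, each Gaussian term doing so and the rapid decay of the terms permitting term-by-term differentiation. It vanishes at $x=\pm a$: pairing index $k$ with $1-k$, at $x=a$ one has $\dn_t(a-2ka)=\dn_t(a-2(1-k)a)$ (both equal $\dn_t(2ka-a)$) while $(-1)^{1-k}=-(-1)^k$, so the paired terms cancel, and $x=-a$ is symmetric. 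And as $t\to0+$, for $x\in(-a,a)$ every term with $k\ne0$ vanishes while $\dn_t(\cdot)$ tends to $\delta_0$. By uniqueness of the bounded solution --- equivalently by the probabilistic representation recalled in Section~\ref{s:basics} applied to functions approximating $\delta_0$ --- this $u$ is the killed density on $(-a,a)$, giving \eqref{e:pre-exit-BM}. The cosine series \eqref{e:pre-exit-BM-ft} then follows from the eigenfunction expansion \eqref{e:heat-eigen} with initial datum $\delta_0$: the coefficients $\alpha_k$ are proportional to $\varphi_k(0)$, so only the odd eigenfunctions $\cos((2k+1)\pi x/(2a))$, which do not vanish at $0$, survive. (Alternatively \eqref{e:pre-exit-BM-ft} is the Jacobi theta transform of \eqref{e:pre-exit-BM}, obtained by Poisson summation.)

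Next I would read the identities \eqref{e:pre-exit-BM-edge} off the two series. The vanishing $\dfex a(t,\pm a)=0$ is immediate from \eqref{e:pre-exit-BM-ft}, since $\cos((2k+1)\pi/2)=0$ for every $k$ (or from the pairing above). For the edge slope I would differentiate \eqref{e:pre-exit-BM-ft} in $x$ and set $x=a$, where $\sin((2k+1)\pi/2)=(-1)^k$, obtaining $\partial_x\dfex a(t,a)=-\frac{\pi}{2a^2}\sumzi k(-1)^k(2k+1)\exp\{-(2k+1)^2\pi^2t/(8a^2)\}$; this is exactly $-\dfe a(t,0)$ by \eqref{e:b-fet2} at $x=0$. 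Since $\dfex a(t,\cdot)$ is even in $x$ (reindex $k\mapsto-k$ in \eqref{e:pre-exit-BM}), its $x$-derivative is odd, so $\partial_x\dfex a(t,-a)=-\partial_x\dfex a(t,a)=\dfe a(t,0)$; and $\dfe a(t,0)>0$ as a probability density, which fixes the sign. (As a check, $-\tfrac12\partial_x\dfex a(t,a)$ is the probability flux through the top boundary, equal by symmetry to $\tfrac12\dfe a(t,0)$.)

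For the bound \eqref{e:q-x} I would mimic the proof of \eqref{e:p-x} in Proposition~\ref{p:b-fet-dx}. By evenness it suffices to treat $x\in[0,a)$ with $a-x$ in place of $a-|x|$. The same pairing of $k$ with $1-k$ turns \eqref{e:pre-exit-BM} into $\dfex a(t,x)=\sumoi k(-1)^k[\dn_t(2ka-x)-\dn_t(2(k-1)a+x)]$, and each bracket vanishes at $x=a$. By the mean value theorem the $k$-th bracket has absolute value $|\dn'_t(\xi_k)|\,2(a-x)=(\xi_k/t)\dn_t(\xi_k)\,2(a-x)$ for some $\xi_k\in(2(k-1)a+x,\,2ka-x)$. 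Using $\xi_k\le2ka$ and the Gaussian shift estimate $\dn_t(u+x)\le e^{-u^2/(2t)}\dn_t(x)$ (valid for $u,x\ge0$) with $u=2(k-1)a$, so that $\dn_t(\xi_k)\le e^{-2(k-1)^2a^2/t}\dn_t(x)$, the $k$-th term is at most $(4ka/t)(a-x)\dn_t(x)e^{-2(k-1)^2a^2/t}$. Summing over $k\ge1$ and reindexing $j=k-1$ yields $\dn_t(x)(a-x)$ times a constant of the form $8at^{-1}\sumzi k(k+1)e^{-2k^2a^2/t}$, which is \eqref{e:q-x}.

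Finally, the strict positivity $\dfex a(t,x)>0$ on $(-a,a)$ is the positivity of the Dirichlet heat kernel on an interval, a standard consequence of the strong maximum principle (equivalently, near-diagonal positivity of the killed kernel together with a chaining/irreducibility argument); it may also be obtained by the first-zero contradiction used for $\dfe a^\pm$ in Proposition~\ref{p:b-fet}, starting from $\dfex a(t,x)=\dn_t(x)(1+o(1))>0$ as $t\to0+$. The step I expect to be the main obstacle is the estimate \eqref{e:q-x}: unlike in \eqref{e:p-x}, the leading Gaussian $\dn_t(x)$ does not itself carry the boundary factor $a-|x|$, so no single term is dominant and the whole series must vanish at the boundary simultaneously. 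The $k\leftrightarrow1-k$ pairing is precisely what enforces this, after which the mean value theorem supplies the factor $a-|x|$ and the Gaussian shift estimate supplies the geometric decay needed for convergence of the bounding series.
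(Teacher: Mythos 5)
Your proposal is correct in all essential claims, but it reaches the central formula \eqref{e:pre-exit-BM} by a genuinely different route. The paper proceeds probabilistically: it writes $\pr^0\{\bm t\in\dd x,\,\fet a\le t\}$ via the strong Markov property at $\fet a$, expands the exit-time density by \eqref{e:b-fet1}, and collapses the resulting convolutions with the reflection principle; the cosine series \eqref{e:pre-exit-BM-ft} is then obtained, as you also do, from the eigenfunction expansion \eqref{e:heat-eigen} (integrated against an arbitrary continuous $g$ rather than against $\delta_0$ directly). Your method-of-images argument---verify the heat equation, the boundary vanishing via the $k\leftrightarrow 1-k$ pairing, and the $\delta_0$ initial condition, then invoke uniqueness---is the analytic counterpart of the same idea; it buys independence from \eqref{e:b-fet1} and from the reflection principle, at the cost of an identification step (see below). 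For strict positivity, you invoke the strong maximum principle (or a first-zero argument as in Proposition \ref{p:b-fet}); the paper instead uses a Chapman--Kolmogorov lower bound $\dfex a(t,x)\ge\int\dfex a(s,u)\dfex\delta(t-s,x-u)\,\dd u$ together with analyticity of the function in \eqref{e:pre-exit-BM}, so that vanishing at a single point forces a factor to vanish identically, contradicting $\dfex a(t,0)>0$; the paper's version is self-contained where yours leans on standard parabolic theory. Your proof of \eqref{e:q-x} is essentially the paper's: the paper applies the mean value theorem to the summed function $g(z)$ in $z=a-x$, while you apply it bracket by bracket, which avoids justifying term-by-term differentiation of the series and even yields the slightly better constant $4at^{-1}\sum_{k\ge0}(k+1)e^{-2k^2a^2/t}$. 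The computations for \eqref{e:pre-exit-BM-edge} coincide with the paper's.

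Two repairs are needed. First, the uniqueness step: the representation theorem recalled in Section \ref{s:basics} assumes a \emph{continuous} initial datum $g$ and represents $\mean^x[g(\bm t)\cf{t<\fet a}]$, i.e.\ solutions integrated against the kernel started from the running point $x$; applying it to approximations of $\delta_0$ therefore requires either the image series for a general source point $y$, or a duality argument (e.g.\ that $s\mapsto\int u(s,y)\,w(t-s,y)\,\dd y$ is constant when $w$ solves the problem with datum $g$, using the Dirichlet conditions to kill the boundary terms), or the symmetry of the killed kernel. This is routine but must be said, since your verification is only for the source at $0$. Second, ``$\dfe a(t,0)>0$ as a probability density'' is not a proof---a density may vanish at a point; the strict inequality in \eqref{e:pre-exit-BM-edge} is precisely the last assertion of Proposition \ref{p:b-fet}, via $\dfe a(t,0)=\dfe a^+(t,0)+\dfe a^-(t,0)>0$, which you should cite instead.
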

\begin{proof}
  The assertion is trivial if $|x|\ge a$.  For $|x|<a$,
  \begin{align*}
    \pr^0\{\bm t\in \dd x, \fet a>t\}
    =
    \pr^0\{\bm t\in \dd x\}
    - 
    \pr^0\{\bm t\in \dd x, \fet a\le t\}.
  \end{align*}
  By the strong Markov property,
  \begin{align*}
    \pr^0\{\bm t\in \dd x, \fet a\le t, \bm{\fet a}=a\}
    &=
    \int^t_0 \pr^0\{\bm t\in \dd x\gv \fet a=s, \bm s=a\}
    \pr^0\{\fet a\in \dd s, \bm{\fet a}=a\}
    \\
    &=
    \frac 12\int^t_0 \pr^0\{\bm{t-s}\in \dd x - a\}
    \dfe a(s,0)\,\dd s.
  \end{align*}
  By \eqref{e:b-fet1} and the strong Markov property again,
  \begin{multline*}
    \pr^0\{\bm t\in \dd x, \fet a\le t, \bm{\fet a}=a\}
    =
    \sumoi k (-1)^{k-1}
    \int^t_0 \pr^0\{\bm{t-s}\in \dd x - a \} \dfh {(2k-1)a}(s)\, \dd s
    \\
    =
    \sumoi k (-1)^{k-1}
    \pr^0\{\bm t\in \dd x + 2(k-1) a, \fht {(2k-1) a}\le t\}
    =
    \sumoi k (-1)^{k-1}
    \pr^0\{\bm t\in 2 ka - \dd x\},
  \end{multline*}
  where the last equality is due to the reflection principle.  For
  each $k$,  $\pr^0\{\bm t\in 2 ka - \dd x\} = \dn_t(x - 2ka)\,\dd 
  x$.  A similar formula holds for $\pr^0\{\bm t\in \dd x, \fet a\le
  t, \bm{\fet a}=-a\}$.  Combining the results, \eqref {e:pre-exit-BM} 
  follows.  By \eqref {e:heat-eigen}, for any $g\in C([-a,a])$, 
  \begin{align*}
    \int^a_{-a} g(x) \pr^0(\bm t\in \dd x, t<\fet a)
    &=
    \mean^0[g(\bm t)\cf{t<\fet a}]
    \\
    &=
    \sum_{k\ge 1, k\text{ odd}}
    \frac 1 a
    \exp\Cbr{-\frac{k^2\pi^2 t}{8 a^2}} 
    \int^a_{-a} g(x) \cos\frac{k\pi x}{2a} \,\dd x.
  \end{align*}
  Comparing the measures on both sides yields \eqref
  {e:pre-exit-BM-ft}.

  To show $\dfex a(t,x)>0$ for $|x|<a$, fix $s\in (0,t)$ and $0<
  \delta < (a - |x|)/2$.  Then $(x-\delta,x+\delta)\subset (-a,a)$, so
  by the Markov property,
  \begin{align*}
    \pr^0\{\bm t \in \dd x,  \fet a>t\}
    &\ge
    \int^{x+\delta}_{x-\delta}
    \pr^0\{\bm s\in\dd u, \fet a>s\}
    \pr^u\{\bm{t-s}\in \dd x, \fet a>t-s\}
    \\
    &\ge
    \int^{x+\delta}_{x-\delta}
    \pr^0\{\bm s\in\dd u, \fet a>s\}
    \pr^0\{\bm{t-s}\in \dd x - u, \fet \delta>t-s\},
  \end{align*}
  where the second inequality is due to $[-\delta, \delta]\subset
  [-a-u, a-u]$ for all $u\in [x-\delta,x+\delta]$.  As a result,
  \begin{align*}
    \dfex a(t,x) \ge \int^{x+\delta}_{x-\delta}
    \dfex a(s,u) \dfex \delta(t-s, x-u)\,\dd u.
  \end{align*}
  If $\dfex a(t,x)=0$, then $\tilde{\dfex{}}(u) := \dfex a(s, u) \dfex
  \delta(t-s, x - u) = 0$ for all $u\in (x-\delta, x+\delta)$.
  However, by \eqref {e:pre-exit-BM}, $\tilde{\dfex{}}$ is analytic on
  $\Coms$.  This leads to $\tilde{\dfex{}}(u)\equiv 0$, implying
  either $\dfex a(s, \cdot)\equiv 0$ or $\dfex \delta(t-s,\cdot)\equiv
  0$, which is impossible, as $\dfex a(t,0) > 0$ by \eqref
  {e:pre-exit-BM-ft}.  The contradiction shows $\dfex a(t, x)>0$.
  
  To show \eqref{e:q-x}, since $\dfex a(t,x) = \dfex a(t,-x)$, it
  suffices to consider $x\ge 0$.  Let $z = a-x$.  Then $z\in (0,a)$
  and by \eqref {e:pre-exit-BM-ft}, 
  \begin{align*}
    \dfex a(t,x) = \sumzi k (-1)^k [\dn_t(2ka+a-z) -
    \dn_t(2ka+a+z)] := g(z).
  \end{align*}
  Since $g(0)=0$, there is $\theta = \theta(z)\in (0,1)$, such that
  $g(z) = g'(\theta z) z$.  By $\dn'_t(x) = -\dfh x(t)$
  \begin{align*}
    |g'(\theta z)|
    &\le
    2\sumzi k [\dfh {2ka+a-z}(t) + \dfh {2ka+a+z}(t)]
    \\
    &\le
    2 t^{-1} \sumzi k (2ka+a+z) [\dn_t(2ka+a-z) + \dn_t(2ka+a+z)]
    \\
    &\le
    8 a t^{-1}  \sumzi k (k+1) \dn_t(2ka+a-z)
    \\
    &\le
    8 a t^{-1} \sumzi k (k+1) e^{-2k^2a^2/t} \dn_t(a-z).
  \end{align*}
  Then \eqref{e:q-x} follows.

  It is straightforward to check the first equation in \eqref
  {e:pre-exit-BM-edge}.  By differentiating each term in \eqref
  {e:pre-exit-BM-ft} at $\pm a$ and comparing the resulting series to
  \eqref{e:b-fet2}, the second equation in \eqref {e:pre-exit-BM-edge}
  obtains.  The inequality in \eqref {e:pre-exit-BM-edge} is just the
  last assertion of Proposition \ref{p:b-fet}.
\end{proof}

\begin{cor}
  Given $a>0$ and $T>0$, for $t>0$ and $x\in (-a,a)$,
  \begin{align} \label{e:pre-exit-BM2}
    \pr^0\{\bm T\in \dd x, \fet a \in T + \dd t, \bm{\fet a} = \pm a\}
    =
    \dfex a(T,x) \dfe a^+(t, \pm x)\,\dd x\,\dd t.
  \end{align}
  As a result, under $\pr^0$, conditional on $\fet a = T+t$ and
  $\bm{\fet a}=a$, the \pdf~of $\bm T$ at $x\in (-a,a)$ is in
  proportion to $\dfex a(T,x) \dfe a^+(t, x)$ and conditional on $\fet
  a = T + t$ and $\bm{\fet a} = -a$, the \pdf~of $\bm T$ at $x\in
  (-a,a)$ is in proportion to $\dfex a(T,-x) \dfe a^+(t,-x)$.
\end{cor}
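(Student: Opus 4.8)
The plan is to read off \eqref{e:pre-exit-BM2} from the Markov property of $\bm{}$ at the fixed time $T$, which splits the event into a ``pre-$T$'' part governed by $\dfex a$ and a ``post-$T$'' part governed by $\dfe a^\pm$. The crucial observation is that for $t>0$ the event $\{\fet a\in T+\dd t\}$ forces $\fet a>T$, so on this event $\bm{}$ stays in $(-a,a)$ throughout $[0,T]$, lands at $\bm T=x\in(-a,a)$, and, starting afresh from $x$, first exits $(-a,a)$ exactly $t$ units of time later through the specified boundary. In words,
\begin{align*}
  &\{\bm T\in \dd x,\ \fet a\in T+\dd t,\ \bm{\fet a}=a\}
  \\
  &\quad=
  \{\fet a>T,\ \bm T\in \dd x\}
  \cap
  \{\text{the post-}T\text{ motion first exits }(-a,a)\text{ at time }t\text{ through }a\}.
\end{align*}

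To make this rigorous I would test against bounded measurable $f$ on $(-a,a)$ and $g$ on $(0,\infty)$ and condition on the evolution of $\bm{}$ up to time $T$. By the Markov property,
\begin{align*}
  \mean^0[f(\bm T)\, g(\fet a - T)\, \cf{\fet a>T}\, \cf{\bm{\fet a}=a}]
  =
  \mean^0\bigl[f(\bm T)\,\cf{\fet a>T}\,\phi(\bm T)\bigr],
\end{align*}
where $\phi(x)=\mean^x[g(\fet a)\,\cf{\bm{\fet a}=a}]=\intzi g(t)\,\dfe a^+(t,x)\,\dd t$ by the definition of $\dfe a^+$. The identification $\phi=\mean^x[g(\cdot)\cf{\cdot}]$ uses that, on $\{\fet a>T\}$, the first-exit time of the shifted motion equals $\fet a-T$ and its exit point equals $\bm{\fet a}$. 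Since Proposition \ref{p:x} gives $\pr^0\{\bm T\in \dd x,\ \fet a>T\}=\cf{|x|<a}\,\dfex a(T,x)\,\dd x$, the right-hand side equals $\int_{-a}^a\intzi f(x)g(t)\,\dfex a(T,x)\,\dfe a^+(t,x)\,\dd t\,\dd x$. As $f$ and $g$ are arbitrary, this is exactly \eqref{e:pre-exit-BM2} in the top-exit ($+$) case.

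The bottom-exit ($-$) case follows by the identical argument with $\cf{\bm{\fet a}=-a}$ in place of $\cf{\bm{\fet a}=a}$, producing the factor $\dfe a^-(t,x)$; invoking $\dfe a^-(t,x)=\dfe a^+(t,-x)$ from \eqref{e:b-fet-top1} recasts it as $\dfe a^+(t,-x)$ and yields the ``$\pm$'' form of the statement (using also the symmetry $\dfex a(T,x)=\dfex a(T,-x)$ noted in Proposition \ref{p:x}). The concluding assertion on conditional densities is then immediate: dividing \eqref{e:pre-exit-BM2} by its integral over $x\in(-a,a)$, i.e.\ by the sub-density of $\{\fet a\in T+\dd t,\ \bm{\fet a}=\pm a\}$, shows the conditional \pdf~of $\bm T$ is proportional to $\dfex a(T,\pm x)\,\dfe a^+(t,\pm x)$. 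I expect no genuine obstacle here; the only point that demands care is the bookkeeping that $\{\fet a\in T+\dd t\}$ with $t>0$ entails $\fet a>T$, so that the post-$T$ first-exit time coincides with $\fet a-T$ and the pre-$T$ and post-$T$ factors decouple as claimed.
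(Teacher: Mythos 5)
Your proposal is correct and follows essentially the same route as the paper: apply the Markov property at the deterministic time $T$ (using that $\{\fet a>T\}$ is measurable with respect to the path up to time $T$) to factor the event into the pre-$T$ density $\dfex a(T,x)$ from Proposition \ref{p:x} and the post-$T$ exit density $\dfe a^\pm(t,x)$, then handle the bottom-exit case via $\dfe a^-(t,x)=\dfe a^+(t,-x)$ and the symmetry $\dfex a(T,x)=\dfex a(T,-x)$. Your version merely spells out the factorization by testing against bounded functions $f$ and $g$, which the paper states directly in differential form.
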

\begin{proof}
  Since $\{\fet a > T\} \in \Cal F(\bm s, s\le T)$, the Markov
  property of $\bm{}$ yields $\pr^0\{\bm T\in\dd x, \fet a \in T+ \dd
  t, \bm{\fet a} =a\} = \pr^0\{\bm T\in \dd x, \fet a>T\} \pr^x\{\fet
  a\in \dd t, \bm{\fet a}=a\}$.  Then the case $\bm{\fet a}=a$ of
  \eqref {e:pre-exit-BM2} follows from Propositions \ref{p:b-fet} and
  \ref{p:x}.  The case $\bm{\fet a}=-a$ is similarly proved.
  Finally, note $\dfex a (T,x) = \dfex a (T,-x)$.
\end{proof}

\section{Sampling of pre-exit location of a Brownian motion}
\label{s:sampling-pre-exit}

Fix $t>0$, $T>0$, and $a>0$.  The objective now is to sample $\bm t$
conditional on $\fet a = T + t$ and $\bm{\fet a}=\pm a$.  By symmetry,
it suffice to consider the case where $\bm{\fet a}=a$.  We 
shall construct an envelope function for $\dfe a^+(t,x)$ and one for
$\dfex a(T,x)$, respectively.  Consider $\dfe a^+(t,x)$ first.  To
emphasize that $x$ is the (only) variate, write $g_t(x) = \dfh x(t)$.  
Let
\begin{align*}
  P_k(x)
  =
  g_t(4ka+a-x) - g_t(4ka+3a+x).
\end{align*}
By \eqref {e:b-fet-top1}, for each $x\in (-a,a)$,
\begin{align} \label{e:p-term}
  \dfe a^+(t,x) = \sumzi k P_k(x).
\end{align}
Let
\begin{align} \label{e:p*}
  p^* = p^*(x,t)
  =
  \min\Cbr{
    n\ge 0: \sum^n_{k=0} P_k(x)\ge 0
  }.
\end{align}
Since $\dfe a^+(t,x)>0$ (Proposition \ref{p:b-fet}), the set on the
right hand side is nonempty, so $p^*$ is well defined.

\begin{prop} \label{p:p*}
  Fix $t>0$ and $a>0$.  Then for $x\in (-a,a)$, 1) $P_{p^*}(x)\ge 0$
  and $P_k(x)>0$ for all $k>p^*$, 2) as $x\to a$,
  \begin{align*}
    p^* \sim \sqrt{\frac{t}{8a^2}\ln \frac 1{a-x}},
  \end{align*}
  and 3) as $x\to -a$, $p^* = O(1)$.
\end{prop}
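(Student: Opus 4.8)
The plan is to analyze the terms $P_k(x) = g_t(4ka+a-x) - g_t(4ka+3a+x)$ directly, where $g_t(y) = |y| t^{-1}\dn_t(|y|)$ for $y>0$ as in \eqref{e:b-fht}. The key structural observation is that both arguments $4ka+a-x$ and $4ka+3a+x$ are positive for $x\in(-a,a)$ and $k\ge 0$, and their difference is $(4ka+3a+x)-(4ka+a-x) = 2a+2x = 2(a+x)\ge 0$. Thus $P_k(x)$ compares $g_t$ at two points whose gap is the fixed quantity $2(a+x)$, while both points march off to $+\infty$ as $k\to\infty$. Since $g_t(y) = y\,t^{-1}\dn_t(y)$ is eventually strictly decreasing in $y$ (specifically for $y>\sqrt t$, as $g_t'(y) = t^{-1}(1-y^2/t)\dn_t(y)$), for $k$ large enough the smaller argument $4ka+a-x$ lies in the decreasing region, which forces $g_t(4ka+a-x) > g_t(4ka+3a+x)$, i.e.\ $P_k(x)>0$. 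This is the mechanism behind claim 1).

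For claim 1) I would make this quantitative: find the threshold $k_0 = k_0(t,a)$ beyond which $4ka+a-x > \sqrt t$ uniformly in $x\in(-a,a)$, so that $P_k(x)>0$ for all $k\ge k_0$. To connect this to $p^*$, I would argue that once the terms become positive, the partial sums are nondecreasing, so if $\sum_{k=0}^{n}P_k(x)$ first turns nonnegative at $n=p^*$, then necessarily $P_{p^*}(x)\ge 0$ (otherwise adding a negative $P_{p^*}$ to the nonnegative $\sum_{k=0}^{p^*-1}$ could not produce the \emph{first} sign change at $p^*$) and all subsequent terms $P_k$ with $k>p^*$ are positive provided $p^*\ge k_0$; the remaining small-index cases where $p^*<k_0$ are handled by the same eventual-positivity of $P_k$. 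The delicate point is establishing $P_{p^*}(x)\ge 0$ rigorously from the minimality in the definition \eqref{e:p*}, together with the fact that once terms are positive the partial sum cannot dip back below zero.

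For the asymptotics in 2) and 3), I would estimate $p^*$ by locating the largest $k$ for which $P_k(x)<0$, i.e.\ the last $k$ for which $g_t(4ka+a-x) < g_t(4ka+3a+x)$. As $x\to a$, the argument $4ka+a-x \to 4ka$ while $4ka+3a+x\to 4ka+4a$, and the sign of $P_k$ is governed by whether $4ka+a-x$ has entered the decreasing region of $g_t$; the boundary case $g_t(4ka+a-x)=g_t(4ka+3a+x)$ can be analyzed by taking logarithms of $g_t(y)=y t^{-1}\dn_t(y)$, which after simplification reduces to comparing $\ln\frac{4ka+a-x}{4ka+3a+x}$ with $\frac{(4ka+3a+x)^2-(4ka+a-x)^2}{2t}$. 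The dominant balance as $x\to a$ gives the leading term where the accumulated negative contributions persist up to $k\approx\sqrt{\frac{t}{8a^2}\ln\frac{1}{a-x}}$, yielding the stated $p^*\sim\sqrt{\frac{t}{8a^2}\ln\frac1{a-x}}$. As $x\to -a$, the gap $2(a+x)\to 0$, so $P_k(x)$ is positive already for small $k$ and $p^*$ stays bounded, giving $p^*=O(1)$.

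The main obstacle I expect is the sharp asymptotic constant in 2): one must show not merely that $p^*\to\infty$ as $x\to a$ but that it grows at exactly the rate $\sqrt{\frac{t}{8a^2}\ln\frac1{a-x}}$. This requires tracking how many negative terms $P_k$ accumulate before the partial sum crosses zero, which is subtler than just finding the last negative term, since the partial sum $\dfe a^+(t,x)$ itself is small (of order $(a-x)$, by Proposition \ref{p:b-fet-dx}) when $x$ is near $a$. I anticipate using the bound $\dfe a^+(t,x)\le c_{\dfe{}}(a-x)\dn_t(a-x)$ from \eqref{e:p-x} together with a lower estimate on the magnitude of the individual negative terms $|P_k(x)|$ to pin down where the cumulative sum of the leading negative terms is overtaken; balancing the exponential factor $e^{-(4ka)^2/(2t)}$ against the algebraic smallness of $(a-x)$ is precisely what produces the logarithm and the constant $t/(8a^2)$.
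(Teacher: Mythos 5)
Your plan for part 1) does not prove the claim as stated. A uniform threshold $k_0$ with $4k_0a\ge\sqrt t$ gives $P_k(x)>0$ for $k\ge k_0$, but the proposition asserts $P_k(x)>0$ for \emph{every} $k>p^*$, and your remark that the cases $p^*<k<k_0$ are ``handled by the same eventual-positivity'' is not an argument: for such $k$ the two arguments of $g_t$ may straddle the mode $\sqrt t$, and unimodality alone does not determine the sign of $P_k$. The step you are missing (and the paper's key observation) is to use $P_{p^*}(x)\ge 0$ itself to localize the arguments: since $g_t$ is strictly increasing on $(0,\sqrt t]$ and strictly decreasing on $[\sqrt t,\infty)$, the inequality $g_t(4p^*a+a-x)\ge g_t(4p^*a+3a+x)$ forces $4p^*a+3a+x>\sqrt t$, and then for $k>p^*$ the smaller argument satisfies $4ka+a-x\ge 4p^*a+5a-x>4p^*a+3a+x>\sqrt t$, so both arguments lie in the strictly decreasing region and $P_k(x)>0$. (The derivation of $P_{p^*}(x)\ge0$ from minimality is fine once stated correctly: $\sum_{k=0}^{p^*-1}P_k<0$ while $\sum_{k=0}^{p^*}P_k\ge0$, hence $P_{p^*}>0$ when $p^*\ge1$.)

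Parts 2) and 3) have more substantial problems. In 2), your primary mechanism --- locating the largest $k$ with $P_k(x)<0$ --- fails quantitatively: $P_k(x)<0$ requires the smaller argument $4ka+a-x$ to lie below $\sqrt t$, so the last negative index is at most $\sqrt t/(4a)$ uniformly in $x$ and cannot produce the $\sqrt{\ln(1/(a-x))}$ divergence. What makes $p^*$ diverge is telescoping: at $x=a$ one has $\sum_{k=0}^nP_k(a)=-g_t(4(n+1)a)<0$ for every $n$, and for $y=a-x$ small the partial sums behave like $d\,y-g_t(4(n+1)a+y)$ with $d=-\partial_x \dfe a^+(t,a)>0$, so $p^*$ sits where $g_t(4(n+1)a+y)$ drops below $d\,y$. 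Your closing heuristic recognizes this balance, but it yields only one inequality; the paper needs \emph{two-sided} control of $p^*$ --- a lower bound from Corollary \ref{c:fet-dx} combined with $P_{p^*}\ge 0$, giving $g_t(4p^*a+4a-y)\le c_{\dfe{}}\dn_t(y)y$, and a matching upper bound from minimality together with a mean-value argument and uniform convergence of derivatives via Proposition \ref{p:b-fet-dx}, giving $(1+o(1))\,d\,y\le g_t(4p^*a-y)$ --- and your proposal has no mechanism for the latter. In 3), your argument rests on a false claim: for $z=a+x$ small, $P_k(x)\approx-2z\,g_t'(4ka+2a)$, which is \emph{negative} whenever $4ka+2a<\sqrt t$, so if $2a<\sqrt t$ the first several terms are negative near $x=-a$, and the vanishing gap by itself proves nothing. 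What actually forces $p^*=O(1)$ is the strict positivity of the boundary derivative $\partial_x \dfe a^+(t,-a)$ in \eqref{e:p-dx-} (the nontrivial content of Proposition \ref{p:b-fet-dx}): the normalized partial sums $z^{-1}\sum_{k\le n}P_k$ converge uniformly in $n$ to constants $d_n$ with $d_n\to\partial_x \dfe a^+(t,-a)>0$, so for small $z$ every partial sum with index beyond a fixed $n^*$ is positive, whence $p^*\le n^*$.
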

\begin{proof}
  1) By definition of $p^*$, for $x\in (-a,a)$, $P_{p^*}(x)\ge 0$, i.e.,
  $g_t(4p^*a+a-x) \ge g_t(4p^*a + 3 a + x)$.  Since $g_t(x)$ is
  strictly increasing on $(0, \sqrt t]$ and strictly decreasing on
  $[\sqrt t, \infty)$, $4p^*a+3a+x>\sqrt t$.  Then for $k>p^*$, $4ka +
  a - x > 4p^*a + 3a+x> \sqrt t$, so $P_k(x)>0$. 

  2) Put $y=a-x$.  By Corollary \ref{c:fet-dx}
  \begin{align} \label{e:p*-y}
    g_t(4p^*a+4a-y)
    \le
    \sum_{0\le k< p^*} P_k(a-y) + g_t(4p^*a+y) \le c_{\dfe{}}
    \dn_t(y) y.
  \end{align}
  On the other hand, by definition of $p^*$,
  \begin{align*}
    u(y):=\sum_{0\le k<p^*-1} P_k(a-y) +  g_t(4(p^*-1)a+y) 
    <
    g_t(4 p^*a-y).
  \end{align*}
  Since $u(0)=0$, there is $\theta = \theta(y)\in (0,1)$ such that
  $u(y) = u'(\theta y) y$.  As $y\to 0$, $p^* \toi$ by \eqref
  {e:p*-y}.  Then by Proposition \ref {p:b-fet-dx} and uniform
  convergence, $u'(\theta y) \to d: = - \partial_x \dfe a^+(t, a)>0$.
  As a result, $(1 + o(1)) d y \le g_t(4 p^* a -y)$.  This combined
  with \eqref {e:p*-y} yields the claimed asymptotic of $p^*$.

  3) Put $z = a+x$ and
  \begin{align*}
    v_n(z)
    =
    \frac 1 t \sum^n_{k=0} [g_t(4k a + 2 a - z) - g_t(4k a+ 2a + z)]
  \end{align*}
  if $n\ge 0$ and $v_n(z)=0$ if $n<0$.  Since $v_n(0)=0$, there is
  $\theta_n = \theta_n(z)\in (0,1)$ such that  $v_n(z) = v'_n(\theta_n
  z) z$.  It is not hard to show that as $z\to 0$, $\sup_n
  |v'_n(\theta_n z) - d_n|\to 0$, where $d_n = -2 t^{-1} \sum_{0\le
    k<n} g'_t(4ka+2a)$.  Since $d_\infty = \partial_x \dfe a^+(t,-a)$,
  which by Proposition \ref {p:b-fet-dx} is strictly positive, there is
  $n^*$ such that $d_n>0$ for all $n\ge n^*$.  Since $v_{p^*-1}(z)\le
  0$, this implies $p^*-1< n^*$ for all $z>0$ small enough.  Thus
  $p^*=O(1)$.
\end{proof}

The asymptotics of $p^*$ in Proposition \ref{p:p*} suggest that for
rejection sampling involving $\dfe a^+(t,x)$, $x\approx a$ should be
handled more carefully than $x\approx -a$.  Letting $y=a-x$, for $k\ge
1$,
\begin{align*}
  P_{p^*+k}(x)
  &\le
  g_t(4 (p^*+k) a + y)
  \\
  &\le
  g_t(4(p^*+1)a + y)
  \times \frac{4(p^*+k) a + y}{4(p^*+1) a + y} 
  e^{-8(k-1)^2a^2/t},
\end{align*}
where the second inequality is due to $\dn_t(y+z) \le \dn_t(y)
e^{-z^2 / (2t)}$ for all $y$, $z>0$.  From the proof of Proposition
\ref {p:p*}, $4(p^*+1)a-y\ge \sqrt t$ and $\dn(x)$ is strictly
decreasing on $[\sqrt t,\infty)$.  Thus $g_t(4(p^*+1)a + y) <
g_t(4(p^*+1)a - y)$ and by \eqref{e:p*-y}, 
\begin{align} \label{e:p-term1}
  0< P_{p^*+k}(x)
  \le
  c_{\dfe{}}\dn_t(a-x) (a-x) \times k e^{-8(k-1)^2 a^2/t}
\end{align}
and meanwhile
\begin{align} \label{e:p-term1x}
  0\le \sum^{p^*}_{k=0} P_k(x) \le c_{\dfe{}} \dn_t(a-x)(a-x).
\end{align}
We will use \eqref{e:p-term}--\eqref{e:p-term1x} to construct an
envelope for $\dfe a^+(t,x)$.

To construct an envelope for $\dfex a(T,x)$, for each $x\in (-a,a)$,
the series in \eqref {e:pre-exit-BM} converges absolutely.  Let
\begin{align*}
  Q_k(x)
  =
  \dn_T(4k a + |x|) - \dn_T(4k a + 2a - |x|) -
  \dn_T(4k a + 2 a + |x|) + \dn_T(4k a + 4a-|x|).
\end{align*}
Noting $\dfex a(T,x) = \dfex a(T,-x)$,
\begin{align} \label{e:q-term}
  \dfex a(T,x)=\sumzi k Q_k(x).
\end{align}
Let 
\begin{align} \label{e:q*}
  q^*=q^*(x,T)
  =
  \min\Cbr{n\ge 0: \sum^n_{k=0} Q_k(x)\ge 0}.
\end{align}
Since $\dfex a(T,x)>0$ by Proposition \ref{p:x}, $q^*$ is well
defined.
\begin{prop} \label{p:q*}
  Fix $T>0$ and $a>0$.  Then for $x\in (-a,a)$, 1) $Q_{q^*}(x)\ge 0$,
  $Q_k(x)>0$ for all $k>q^*$, and 2) as $|x|\to a$, $q^* = O(1)$.
\end{prop}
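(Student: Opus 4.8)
The plan is to mirror the two-part structure of Proposition \ref{p:p*} but adapted to the function $\dfex a$ and its defining terms $Q_k$. For part 1), I would argue directly from the definition of $q^*$ together with the shape of the summand $\dn_T$. Observe that $Q_k(x)$ is a signed combination of four Gaussian densities evaluated at shifted arguments $4ka+|x|$, $4ka+2a-|x|$, $4ka+2a+|x|$, and $4ka+4a-|x|$. Since $\dn_T$ is strictly decreasing on $(0,\infty)$, I would pair the four terms so as to show that once the partial sum $\sum_{k=0}^{n}Q_k$ becomes nonnegative, every subsequent term $Q_k$ with $k>q^*$ is strictly positive. Concretely, I expect $Q_k(x)>0$ to hold as soon as the smallest shift $4ka+|x|$ is past the point where $\dn_T$ has decayed enough that the two negative middle terms are dominated by the positive outer terms; this should follow from monotonicity of $\dn_T$ on $(0,\infty)$ and convexity-type comparisons among the four arguments, analogous to how part 1) of Proposition \ref{p:p*} used the unimodality of $g_t$.

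For part 2), the claim is the simpler asymptotic $q^*=O(1)$ as $|x|\to a$, i.e.\ $q^*$ stays bounded, unlike the logarithmically growing $p^*$ near the top boundary. Here I would follow the template of part 3) of Proposition \ref{p:p*}, which handled $x\to -a$ by a mean-value expansion. Specifically, I would set $w=a-|x|$ (so $w\to 0$ as $|x|\to a$), group the terms of the partial sum $\sum_{k=0}^{n}Q_k(x)$ into an expression vanishing at $w=0$, and apply the mean value theorem to write it as a derivative times $w$. The key point is that the limiting derivative should be governed by $\partial_x\dfex a(T,\cdot)$ at the boundary, which by the last assertion \eqref{e:pre-exit-BM-edge} of Proposition \ref{p:x} equals $\mp\dfe a(T,0)\ne 0$ and hence is nonzero. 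A nonzero boundary derivative forces the truncated sums to change sign at a bounded index, giving $q^*=O(1)$.

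The main obstacle I anticipate is establishing the uniform convergence of the relevant derivatives, exactly as in the proof of part 3) of Proposition \ref{p:p*}, where the delicate step was showing $\sup_n|v_n'(\theta_n z)-d_n|\to 0$. In the present setting I would need the analogous uniform estimate for the truncated derivative sequences built from $Q_k$, so that the mean-value argument transfers the nonvanishing of the limiting boundary derivative to a uniform-in-$n$ statement. The Gaussian tail decay $\dn_T(a-z)e^{-c k^2 a^2/T}$ already exploited in \eqref{e:q-x} should supply the needed domination to justify swapping limits and interchanging sum and derivative. Once that uniformity is in hand, the nonzero value of $\partial_x\dfex a(T,\pm a)=\mp\dfe a(T,0)$ from \eqref{e:pre-exit-BM-edge} closes the argument, pinning $q^*$ below a finite threshold for all $|x|$ sufficiently close to $a$.
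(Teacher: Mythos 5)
Your part 2) is exactly the paper's argument: the paper proves it by combining \eqref{e:pre-exit-BM-edge} with the mean-value/uniform-convergence argument of part 3) of Proposition \ref{p:p*}, i.e.\ precisely the reduction you describe (partial sums vanishing at $w=a-|x|=0$, mean value theorem, and the nonvanishing boundary derivative $\partial_x \dfex a(T,\pm a)=\mp\dfe a(T,0)$), including the domination needed to get the uniform-in-$n$ estimate.

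Part 1), however, has a genuine gap: monotonicity of $\dn_T$ is the wrong tool and cannot prove the statement. Since $\dn_T$ is strictly decreasing on all of $(0,\infty)$, it only tells you that $\dn_T(4ka+|x|)-\dn_T(4ka+2a-|x|)>0$ while $\dn_T(4ka+4a-|x|)-\dn_T(4ka+2a+|x|)<0$; one bracket is positive and the other negative, and no comparison follows. A decay/domination argument (``the first term eventually swamps the two middle ones'') does give $Q_k(x)>0$ for all $k$ beyond some threshold, but that threshold depends on $(x,a,T)$ and bears no relation to $q^*$; in particular it cannot yield the actual claim, which is positivity for \emph{every} $k>q^*$, nor can it exploit the defining property $Q_{q^*}(x)\ge 0$. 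The property that does the work --- and the one the paper invokes --- is that $\dn_T$ is strictly concave on $(0,\sqrt T\,]$ and strictly convex on $[\sqrt T,\infty)$. Group $Q_k$ as a second difference,
\begin{align*}
  Q_k(x)
  =
  \bigl[\dn_T(u-h)-\dn_T(u)\bigr]-\bigl[\dn_T(v)-\dn_T(v+h)\bigr],
  \quad
  u=4ka+2a-|x|,\ v=4ka+2a+|x|,\ h=2(a-|x|),
\end{align*}
so that $Q_k(x)=\int^h_0[\dn_T'(v+s)-\dn_T'(u-h+s)]\,\dd s$. If all four arguments lie in the concavity region $(0,\sqrt T\,]$ this is strictly negative, and if they all lie in the convexity region $[\sqrt T,\infty)$ it is strictly positive. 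Hence $Q_{q^*}(x)\ge 0$ forces the largest argument $4q^*a+4a-|x|$ past the inflection point $\sqrt T$, and then for every $k>q^*$ the smallest argument satisfies $4ka+|x|\ge 4q^*a+4a+|x|>\sqrt T$, so strict convexity gives $Q_k(x)>0$. This is the correct analogue of part 1) of Proposition \ref{p:p*}: because $Q_k$ is a second difference where $P_k$ was a first difference, the mode $\sqrt t$ of $g_t$ must be replaced by the inflection point $\sqrt T$ of $\dn_T$, one derivative up from the monotonicity you invoke.
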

Indeed, since $\dn_T$ is strictly concave on $(0,\sqrt T]$ and
strictly convex on $[\sqrt T, \infty)$, 1) follows by similar argument
for 1) of Proposition \ref{p:p*}.  On the other hand, 2) follows from
\eqref{e:pre-exit-BM-edge} and similar argument for 3) of Proposition
\ref{p:p*}.  The detail of the proof is omitted for brevity.

Since $\dn_T$ is strictly decreasing on $[0,\infty)$, for each $k\ge
0$, $Q_k(x)  < \dn_T(4k a + |x|) - \dn_T(4k a + 2a-|x|) = -2\dn'_T(4k a
+ y) (a-|x|)$, where $y = y(x,k)\in (|x|, 2a-|x|)$.  By
$|\dn'_T(4ka+y)| \le T^{-1} (4ka+2a) \dn_T(4ka+|x|) \le T^{-1}
(4ka+2a) e^{-8 k^2 a^2/T}\dn_T(x)$,
\begin{gather} \label{e:q-term1}
  Q_k(x) \le 4 a T^{-1}(2k+1) e^{-8 k^2 a^2/T} \dn_T(x)(a-|x|), \quad
  k>q^*.
\end{gather}
Meanwhile,
\begin{align} \label{e:q-term2}
  0\le \sum^{q^*}_{k=0} Q_k(x)
  <
  4 a T^{-1}(2q^*+1) e^{-8 (q^*)^2 a^2/T} \dn_T(x)(a-|x|).
\end{align}

Now \eqref {e:p-term}--\eqref {e:q-term2} can be combined as follows.
Define
\begin{gather} \label{e:gamma-t-T}
  \gamma(x) = \gamma(x, T,t)
  =
  4 a T^{-1} \cf{|x|<a}\dn_t(a-x)(a-x) \dn_T(x) (a-|x|).
\end{gather}
For $k\ge 0$, define
\begin{align} \label{e:a-k-t}
  a_k
  &=
  a_k(t)
  =
  \begin{cases}
    1 & k=0 \\
    2k e^{-8(k-1)^2 a^2/t} & k\ge 1
  \end{cases}
\end{align}
and
\begin{align} \label{e:b-k-T}
  b_k = b_k(T) = (2 k + 1) e^{-8 k^2 a^2/T}.
\end{align}
For $x\in (-a,a)$ and $k\ge 0$, define
\begin{align} \label{e:rk}
  r_k(x, m)
  =
  \begin{cases}
    \displaystyle
    \frac{\sum^m_{k=0} P_k(x)}{c_{\dfe{}} \dn_t(a-x) (a-x) a_k}
    &
    \text{if } k=0
    \\[2ex]
    \displaystyle
    \frac{P_{m+k}(x)}{c_{\dfe{}} \dn_t(a-x)(a-x) a_k}
    &
    \text{else} 
  \end{cases}
\end{align}
and
\begin{align} \label{e:sk}
  s_k(x,m)
  =
  \begin{cases}
    0 & \text{if } k<m
    \\
    \displaystyle
    \frac{\sum^m_{k=0} Q_k(x)}{4 a T^{-1} \dn_T(x)(a-|x|) b_k}
    & \text{if } k=m
    \\[2ex]
    \displaystyle
    \frac{Q_k(x)}{4 a T^{-1} \dn_T(x)(a-|x|) b_k} &
    \text{else}
  \end{cases}
\end{align}
Then $r_k(x, p^*)\in [0,1)$, $s_k(x, q^*)\in [0,1)$ for all $k\ge 0$
and
\begin{align*}
  \dfe a^+(t, x)\dfex a(T,x)
  =
  \gamma(x)
  \times
  \sumzi k a_k r_k(x, p^*)
  \times
  \sumzi k b_k s_k(x, q^*),
\end{align*}
The rejection sampling of the \pdf\ $\gamma(x)/\int^a_{-a} \gamma$ is
quite routine, although for efficiency, the detail has to depend on
$a$, $t$, and $T$ (more precisely, on $a/\sqrt t$ and $a/\sqrt T$).
On the other hand, note that both $a_k$ and $b_k$ are log-concave,
i.e., $a_{k-1} a_{k+1} \ge a^2_k$ and $b_{k-1} b_{k+1} \ge b^2_k$.  It
was shown in \cite {devroye:87:comp} that a log-concave distribution
on integers can be sampled efficiently by rejection sampling.  The
only minor issue here is that the values of the normalizing constants
for $a_k$ and $b_k$ in general are not available exactly.  However, it
is easy to find positive lower and upper bounds for the sequences.
When these bounds are used in place of the exact normalizing
constants, the rejection sampling in \cite {devroye:87:comp} still
works with some minor modifications and loss of efficiency.

\begin{algorithm}[ht]
  \caption{Sampling $\bm T$ under $\pr^0$, conditional on $\fet a =
    T+t$ and $\bm{\fet a}=a$}
  \label{a:bm-x}
  \begin{algorithmic}[1]
    \REQUIRE $a\in (0,\infty)$, $T\in (0,\infty)$, $t\in (0,\infty)$
    \REPEAT 
    \STATE Sample $X\sim \gamma/\int \gamma$, where $\gamma(x) =
    \gamma(x,T,t)$ is defined in \eqref{e:gamma-t-T}
    \STATE Compute $p^*=p^*(X,t)$ by \eqref{e:p*} and $q^* = q^*(X,T)$ by
    \eqref{e:q*}
    \STATE Sample $\kappa_1\in \{0,1,\ldots\}$ with \pmf\ $a_k/\sum_j
    a_j$, where $a_k = a_k(t)$ is defined in \eqref{e:a-k-t} 
    \STATE Sample $\kappa_2\in \{0,1,\ldots\}$ with \pmf\ $b_k/\sum_j
    b_j$, where $b_k = b_k(T)$ is defined in \eqref{e:b-k-T}
    \STATE Sample $U\sim \dunif(0,1)$
    \UNTIL $r_{\kappa_1}(X, p^*) s_{\kappa_2}(X, q^*) \le U$,
    where $r_k$ and $s_k$ are defined in \eqref{e:rk} and
    \eqref{e:sk}, respectively
    \RETURN $X$
  \end{algorithmic}
\end{algorithm}

The above results lead to the rejection sampling Algorithm
\ref{a:bm-x}.

\section{Comments} \label{s:comments}
This paper only considers \levy processes that can be embedded into a
subordinated standard Brownian motion.  In principle, the scheme
illustrated in Figure \ref{f:fet} can be applied to any \levy process,
for example, a spectrally negative $\alpha$-stable \levy process with
$\alpha \in [1,2)$.  Indeed, without subordination being involved, the
scheme can be somewhat simplified.  However, even for the spectrally
negative $\alpha$-stable process, which has many remarkable
properties, there are few close form formulas for its first exit event
(cf.\ \cite {bertoin:96:cup, doney:07:sg-b, kyprianou:06:sv-b}).
Since all \levy processes are differences of independent spectrally
negative \levy processes, it would be interesting to find exact
sampling methods for the first exit event of the latter.

The procedure in Algorithm \ref{a:fet} in principle may also be
applied to a subordinated Brownian motion that has a drift or takes
values in $\Reals^d$ with $d>1$.  For example, in each iteration,
intead of an interval centered at the current value of a Brownian
motion (cf.\ step \ref{a:fet-a}), use a sphere of small radius so that
jumps of large size can be detected and removed.  One potential
problem is that for $d>1$, when there is a positive chance for the
\levy process to creep across the boundary of a region, the iteration
in Algorithm \ref{a:fet} may not be able to stop.  This is because the
chance for the Brownian motion to hit the ``right spots'' on the
sphere, namely, the intersection between the sphere and the boundary
of the region, can be 0; at any other spot on the sphere, a new sphere
has to be constructed, possibly with a smaller radius.  As a result,
the iteration will approach infinitely but never reach the location of
the first exit.  Regardless, there is no extra work on the sampling
for the subordinator.  Also, the first exit time out of a sphere by a
Brownian motion is well understood.  On the other hand, the
distribution of the pre-exit value of the Brownian motion becomes
substantially subtle.  Provided the distribution is available in
close form, the procedure of the paper can be extended
straightforwardly.

\begin{hide}
\def\dc{\nu}            % Drift coefficient
For $\dc\ne0$, denote
\begin{align*}
  \fht a\Sp\dc = \inf\{t: \dc t + \bm t = a\}
\end{align*}
and
\begin{align*}
  \fet a\Sp\dc
  =
  \inf\{t: \dc t + \bm t\in\{-a, a\} \}
  =
  \fht{-a}\Sp\dc\wedge \fht a\Sp\dc. 
\end{align*}
It is known that
\begin{align}  \label{e:b-fht-drift}
  \pr^x\{\fht a\Sp\dc \in \dd t\}
  = \frac{|a-x|}{\sqrt{2\pi} t^{3/2}} \exp\Sbr{
    -\frac{(a-x-\dc t)^2}{2t}
  }\,\dd t
\end{align}
and
\begin{align} \label{e:b-fht-drift2}
  \pr^x\{\fht a\Sp\dc = \infty\}
  =
  \begin{cases}
    0 & \text{if } \dc(a-x)\ge 0 
    \\
    1- e^{2\dc(a-x)} & \text{else}
  \end{cases}
\end{align}
(\cite{borodin:02:bvb}, p.~295, 2.0.2), while for $a>0$ and $x\in
(-a,a)$,
\begin{align} \label{e:b-fet-top1-drfit}
  \begin{split}
    \pr^x\{\fet a\Sp\dc = \fht{-a}\Sp\dc\in\dd t\}
    &=
    e^{-\dc(x+a) - \dc^2 t/2} \dfe a^-(t,x)\,\dd t,
    \\
    \pr^x\{\fet a\Sp\dc = \fht a\Sp\dc\in\dd t\}
    &=
    e^{-\dc(x-a) - \dc^2 t/2} \dfe a^+(t,x)\,\dd t
  \end{split}
\end{align}
(\cite{borodin:02:bvb}, p.~309, 3.0.6).
\end{hide}

\begin{small}

\end{small}


\begin{thebibliography}{17}
\expandafter\ifx\csname natexlab\endcsname\relax\def\natexlab#1{#1}\fi
\expandafter\ifx\csname url\endcsname\relax
  \def\url#1{\texttt{#1}}\fi
\expandafter\ifx\csname urlprefix\endcsname\relax\def\urlprefix{URL }\fi
\providecommand{\eprint}[2][]{\url{#2}}

\bibitem[{Alili and Kyprianou(2005)}]{alili:05:aap}
\textsc{Alili, L.} and \textsc{Kyprianou, A.~E.} (2005).
\newblock Some remarks on first passage of {L}\'evy processes, the {A}merican
  put and pasting principles.
\newblock \textit{Ann. Appl. Probab.}, \textbf{15} 2062--2080.

\bibitem[{Bertoin(1996)}]{bertoin:96:cup}
\textsc{Bertoin, J.} (1996).
\newblock \textit{L\'evy processes}, vol. 121 of \textit{Cambridge Tracts in
  Mathematics}.
\newblock Cambridge University Press, Cambridge.

\bibitem[{Bertoin(1997)}]{bertoin:97:bsm}
\textsc{Bertoin, J.} (1997).
\newblock Regularity of the half-line for {L}\'evy processes.
\newblock \textit{Bull. Sci. Math.}, \textbf{121}.

\bibitem[{Bertoin et~al.(1999)Bertoin, van Harn and Steutel}]{bertoin:99:spl}
\textsc{Bertoin, J.}, \textsc{van Harn, K.} and \textsc{Steutel, F.~W.} (1999).
\newblock Renewal theory and level passage by subordinators.
\newblock \textit{Statist. Probab. Lett.}, \textbf{45} 65--69.

\bibitem[{Borodin and Salminen(2002)}]{borodin:02:bvb}
\textsc{Borodin, A.~N.} and \textsc{Salminen, P.} (2002).
\newblock \textit{Handbook of {B}rownian motion---facts and formulae}.
\newblock 2nd ed. Probability and its Applications, Birkh\"auser Verlag, Basel.

\bibitem[{Chi(2012)}]{chi:12:advap}
\textsc{Chi, Z.} (2012).
\newblock On exact sampling of nonnegative infinitely divisible random
  variables.
\newblock \textit{Adv. Appl. Probab.}, \textbf{44} 842--873.

\bibitem[{Chi(2016)}]{chi:16:spa}
\textsc{Chi, Z.} (2016).
\newblock On exact sampling of the first passage event of a {L}\'evy process
  with infinite {L}\'evy measure and bounded variation.
\newblock \textit{Stochastic Processes Appl.}, \textbf{126} 1124--1144.

\bibitem[{Devroye(1987)}]{devroye:87:comp}
\textsc{Devroye, L.} (1987).
\newblock A simple generator for discrete log-concave distributions.
\newblock \textit{Computing}, \textbf{39} 87--91.

\bibitem[{Doney(2007)}]{doney:07:sg-b}
\textsc{Doney, R.~A.} (2007).
\newblock \textit{Fluctuation theory for {L}\'evy processes}, vol. 1897 of
  \textit{Lecture Notes in Mathematics}.
\newblock Springer, Berlin.

\bibitem[{Doney and Kyprianou(2006)}]{doney:06:aap}
\textsc{Doney, R.~A.} and \textsc{Kyprianou, A.~E.} (2006).
\newblock Overshoots and undershoots of {L}\'evy processes.
\newblock \textit{Ann. Appl. Probab.}, \textbf{16} 91--106.

\bibitem[{Doney and Maller(2002)}]{doney:02:ap}
\textsc{Doney, R.~A.} and \textsc{Maller, R.~A.} (2002).
\newblock Stability of the overshoot for {L}\'evy processes.
\newblock \textit{Ann. Probab.}, \textbf{30} 188--212.

\bibitem[{Glasserman(2004)}]{glasserman:04:sv-ny}
\textsc{Glasserman, P.} (2004).
\newblock \textit{Monte {C}arlo methods in financial engineering}, vol.~53 of
  \textit{Applications of Mathematics (New York)}.
\newblock Springer-Verlag, New York.
\newblock Stochastic Modelling and Applied Probability.

\bibitem[{Huzak et~al.(2004)Huzak, Perman, {\v{S}}iki{\'c} and
  Vondra{\v{c}}ek}]{huzak:04:jap}
\textsc{Huzak, M.}, \textsc{Perman, M.}, \textsc{{\v{S}}iki{\'c}, H.} and
  \textsc{Vondra{\v{c}}ek, Z.} (2004).
\newblock Ruin probabilities for competing claim processes.
\newblock \textit{J. Appl. Probab.}, \textbf{41} 679--690.

\bibitem[{Kl{\"u}ppelberg et~al.(2004)Kl{\"u}ppelberg, Kyprianou and
  Maller}]{kluppelberg:04:aap}
\textsc{Kl{\"u}ppelberg, C.}, \textsc{Kyprianou, A.~E.} and \textsc{Maller,
  R.~A.} (2004).
\newblock Ruin probabilities and overshoots for general {L}\'evy insurance risk
  processes.
\newblock \textit{Ann. Appl. Probab.}, \textbf{14} 1766--1801.

\bibitem[{Kyprianou(2006)}]{kyprianou:06:sv-b}
\textsc{Kyprianou, A.~E.} (2006).
\newblock \textit{Introductory lectures on fluctuations of {L}\'evy processes
  with applications}.
\newblock Universitext, Springer-Verlag, Berlin.

\bibitem[{M{\"o}rters and Peres(2010)}]{morters:10:cup}
\textsc{M{\"o}rters, P.} and \textsc{Peres, Y.} (2010).
\newblock \textit{Brownian motion}.
\newblock Cambridge Series in Statistical and Probabilistic Mathematics,
  Cambridge University Press, Cambridge.
\newblock With an appendix by Oded Schramm and Wendelin Werner.

\bibitem[{Sato(1999)}]{sato:99:cup}
\textsc{Sato, K.-I.} (1999).
\newblock \textit{L\'evy processes and infinitely divisible distributions},
  vol.~68 of \textit{Cambridge Studies in Advanced Mathematics}.
\newblock Cambridge University Press, Cambridge.
\newblock Translated from the 1990 Japanese original, Revised by the author.

\end{thebibliography}
\end{document}